\newtheorem{lem}{Lemma}[section]
\newtheorem{thm}[lem]{Theorem}
\newtheorem{prop}[lem]{Proposition}
\numberwithin{equation}{section}
\newtheorem*{cor*}{Corollary}
\newtheorem*{thm*}{Theorem}
\theoremstyle{definition}
\newtheorem{defi}{Definition}[section]
\theoremstyle{remark}
\newcommand{\NN}{\mathbb{N}}
\newcommand{\ZZ}{\mathbb{Z}}
\newcommand{\QQ}{\mathbb{Q}}
\newcommand{\RR}{\mathbb{R}}
\renewcommand{\lVert}{\left\Vert}
\renewcommand{\rVert}{\right\Vert}
\newcommand{\abs}[1]{\left| #1 \right|}
\newcommand{\norm}[1]{\left\| #1 \right\|}
\title[On Finite Pseudorandom Binary Sequences]{On Finite Pseudorandom Binary
  Sequences: \\Generalized polynomials}
\author[M. G. Madritsch]{Manfred G. Madritsch}
\address[M. G. Madritsch]{
  Université de Lorraine, CNRS, IECL, F-54000 Nancy, France}
\email{manfred.madritsch@univ-lorraine.fr}
\author[R. F. Tichy]{Robert F. Tichy}
\address[R. F. Tichy]{
  Graz University of Technology, Institute of Analysis and Number
  Theory, 8010~Graz, Austria}
\email{tichy@tugraz.at}
\subjclass[2020]{Primary 11K45; Secondary 11K06, 11K36, 11J71}
\keywords{pseudorandom, binary sequence, generalized polynomials, well-distribution, correlation}
\date{\today}
\begin{document}

\begin{abstract}
  In the present paper we generate binary pseudorandom sequences using
  generalized polynomials. A generalized polynomial is a function in whose
  description we not only allow addition and product (as it is the case in usual
  polynomials) but also the floor function. We estimate the well-distribution
  measure, looking at the ``randomness'' along arithmetic progressions.
\end{abstract}

\maketitle

\section{Introduction}

A finite pseudorandom binary sequence is a finite sequence over the alphabet
$\{-1,+1\}$. These sequences and their consideration were introduced amongst
others by Mauduit and Sárközy
\cite{mauduit_sarkoezy1997:finite_pseudorandom_binary} and play an important
role in financial mathematics as well as computer sciences. 

In order to quantify the random behaviour of such a sequence, Mauduit and
Sárközy introduced three measurements in
\cite{mauduit_sarkoezy1997:finite_pseudorandom_binary} : normality,
well-distribution along arithmetic progressions and small multiple correlations.
The present article focuses on the well-distribution measure -- the randomness
of the sequence by only looking at arithmetic progressions. A clever way to
investigate them is to look at sums of elements (along arithmetic progressions).
In particular, if the sum is rather large (in absolute value), then there must
be more elements of one kind, meaning that the sequence cannot be very
``random''. This simple observation is also the reason why we preferred the
alphabet $\{-1,+1\}$ over $\{0,1\}$.

Now we want to be more precise. Let
$E_N=\left(e_n\right)_{n=1}^N\in\{-1,+1\}^{N}$ be a finite pseudorandom binary
sequence. Then for three integers $a\in\NN^*$, $b\in\ZZ$ and $M\in\NN^*$ such
that $1\leq a+b<aM+b\leq N$, we consider a sum of the form
\[
  U(E_N,M,a,b)=\sum_{m=1}^M e_{am+b}.
\]
The well-distribution measure is the maximal absolute value we might obtain by
suitable choosing $a,b$ and $M$, \textit{i.e.}
\[
  W(E_N)=\max_{a,b,M}\abs{U(E_N,M,a,b)}=\max_{a,b,M}\abs{\sum_{m=1}^M e_{am+b}}.
\]
Clearly we have $W(E_N)\leq N$. The limiting distribution of the
well-distribution measure of a true random binary sequences has been established
by Aistleitner \cite{aistleitner2013:limit_distribution_well}.

A canonical way for constructing a pseudorandom binary sequence, is to take the
fractional part of a sequence of real numbers. In particular, we set
\[
  \chi(x)=\begin{cases}
    +1 &\text{if }x-\left\lfloor x\right\rfloor< \frac12,\\
    -1 &\text{if }x-\left\lfloor x\right\rfloor\geq \frac12.
  \end{cases}
\]
Then every finite sequence $(u_n)_{n=1}^N$ of real numbers gives rise to a
pseudorandom binary sequence via $e_n=\chi(u_n)$ for $1\leq n\leq N$. Moreover
this choice links the estimation of the well-distribution measure with the
discrepancy of the underlying sequence. We define the discrepancy $D_N$ of a
sequence $\left(u_n\right)_{n=1}^N$ of real numbers by
\[
  D_N\left(\left(u_n\right)_{n=1}^N\right)
  =\sup_{0\leq a<b\leq 1}\left|
    \frac{\#\{1\leq n\leq N\colon \{u_n\}\in[a,b[}{N} - (b-a)\right|.
\]
Then for $a\in\NN^*$, $b\in\ZZ$ and $M\in\NN^*$ such that $1\leq a+b\leq
aM+b\leq N$, we have 
\begin{multline*}
  \abs{U(E_N,M,a,b)}
  =\abs{\sum_{m=1}^M e_{am+b}}\\
  \leq \abs{\sum_{m=1}^M\left(1\!\!1_{\chi\left(u_{am+b}\right)<\tfrac12}-\frac12\right)}
  +\abs{\sum_{m=1}^M\left(1\!\!1_{\chi\left(u_{am+b}\right)\geq\tfrac12}-\frac12\right)}\leq 2D_M\left(\left(u_{am+b}\right)_{m=1}^M\right).
\end{multline*}
Therefore uniformly distributed sequences and sequences with low discrepancy are
good candidates for pseudorandom sequences with low well-distribution measure.
In the present paper we focus on polynomial-like sequences. For different
sequences and more general information on uniform distribution and the
discrepancy we refer the interested reader to the book of Drmota and Tichy
\cite{drmota_tichy1997:sequences_discrepancies_and}.

Mauduit and Sárközy considered in Part 5
\cite{mauduit_sarkoezy2000:finite_pseudorandom_binary} and Part 6
\cite{mauduit_sarkoezy2000:finite_pseudorandom_binary2} of their series the case
$f(n)=\alpha n^k$ with $\alpha$ real and $k\geq1$ an integer. Their results
depend on the coefficients of the continued fraction expansion of $\alpha$. If,
on the one hand, the coefficients are bounded, then they could show, that there
exists an $\eta$ depending only on $\alpha$ and $k$ such that
\[
  W(E_N)\ll N^{1-\eta+\varepsilon},
\]
where the implied constant also depends on $\alpha$ and $k$. If, on the other
hand, there is no such bound for the coefficients of the continued fraction
expansion of $\alpha$, then they could show that
\[
  W(E_N)\gg N.
\]

These considerations were extended to the Piatetski-Shapiro sequence $u_n=n^c$,
where $c>1$ is not an integer. Mauduit, Rivat and Sárközy
\cite{mauduit_rivat_sarkoezy2002:pseudo_random_properties} could show an
estimate $U(E_N,M,a,b)$, provided that $a$ is not too large (with respect to
$N$). These functions are part of the larger class of pseudo-polynomials. A
pseudo-polynomial is a function $f(x)$ of the form
\[
  f(x)=\alpha_dx^{\beta_d}+\cdots+\alpha_1x^{\beta_1},
\]
where $\alpha_1,\ldots,\alpha_d,\beta_1,\ldots,\beta_d$ are reals with
$\alpha_d\neq0$, $\beta_d>\cdots>\beta_1>0$ and at least one
$\beta_i\not\in\ZZ$.

The sequences of pseudo-polynomials carry many properties of polynomials like
forming a Poincaré set $P$. Let $(X,\mathfrak{B},\mu,T)$ be a measure preserving
dynamical system. Then we call a set $P\subset\NN^*$ a Poincaré set (or set of
recurrence) if for each set $A\in\mathfrak{B}$ of positive measure $\mu(A)>0$,
there exists $n\in P$ such that $A\cap T^n(A)\neq \emptyset$. Bergelson
\textit{et al.}
\cite{bergelson_kolesnik_madritsch+2014:uniform_distribution_prime} showed that
the sets formed by a pseudo-polynomial sequence are Poincaré sets. This was
later extended by the authors \cite{madritsch_tichy2016:dynamical_systems_and}
to other polynomial like families as well as to questions around approximation
in \cite{madritsch_tichy2019:multidimensional_van_der}. For more on the connection of uniform distribution, dynamical systems and
approximation we refer to the monograph of Montgomery
\cite{montgomery1994:ten_lectures_on}.

Inspired by these results, Bergelson, Kolesnik and Son
\cite{bergelson_kolesnik_son2019:uniform_distribution_subpolynomial} considered
the even larger class of Hardy fields. These fields consist of functions that
behave asymptotically like a power function, however, they are not polynomials.
Therefore even if they behave like an integer power their derivative does not
vanish identically, which is normally a mayor problem in the consideration. In
their paper \cite{bergelson_kolesnik_son2019:uniform_distribution_subpolynomial}
they showed, amongst other things, that the set $\{f(n)\colon n\in\NN^*\}$,
where $f$ is from a Hardy field is a Poincaré set. Using this more general class
of functions Rivat together with the two authors
\cite{madritsch_rivat_tichy2024:finite_pseudorandom_binary} extended the above
results for the well-distribution measure. Thereby they got rid of the
dependency on the size of $a$ providing a general bound.

Another class of polynomial-like function became of recent interest in dynamical
systems. This is the set of generalized polynomials $\mathrm{GP}$, where we also
allow the floor function in their description. To be more specific let
$\mathrm{GP}_0$ be the usual ring of polynomials $\RR[x]$ with real
coefficients. Then $\mathrm{GP}=\bigcup_{n=1}^{\infty} \mathrm{GP}_n$, where,
for $n\geq1$, we recursively define
\[
  \mathrm{GP}_n=\mathrm{GP}_{n-1}\cup
  \{v+w\colon v,w\in \mathrm{GP}_{n-1}\}\cup
  \{v\cdot w\colon v,w\in \mathrm{GP}_{n-1}\}\cup
  \{\left\lfloor v\right\rfloor\colon v\in \mathrm{GP}_{n-1}\}.
\]
Bergelson, H\aa land and Son
\cite{bergelson_ha_son2021:extension_weyls_equidistribution} recently
characterized that $u_n=f(n)$ with $f\in \mathrm{GP}$ is uniformly distributed
provided $g$ fulfils some canonical conditions, which are very similar to the
notion of finite type below. However, they do not give a general discrepancy
estimate. Hofer and Ramaré \cite{hofer_ramare2016:discrepancy_estimates_some}
and Mukhopadhyay, Ramaré and Viswanadham
\cite{mukhopadhyay_ramare_viswanadham2018:discrepancy_estimates_generalized}
considered the discrepancy of the sequence $u_n=f(n)$ where
$f(n)=\beta\left\lfloor \alpha p(n)\right\rfloor$ with $p$ a monic polynomial
and $\alpha$ and $\beta$ reals, such that $(\alpha,\alpha\beta)$ is of finite
type $t$.

\begin{defi}
  Let $\gamma_1,\ldots,\gamma_s$ be reals and $t>0$. Then we say that
  $(\gamma_1,\ldots,\gamma_s)$ is of finite type $t$ if there exists a constant
  $c(\varepsilon,\gamma_1,\ldots,\gamma_s)=c>0$ such that
  \[
    \prod_{j=1}^s\left(\max\left(1,\abs{n_j}\right)\right)^{t+\varepsilon}
    \left\|\sum_{j=1}^s n_j\gamma_j\right\|\geq c,
  \]
  where $\left\|\cdot\right\|$ denotes the distance to the nearest integer.
\end{defi}

The aim of the present paper is to extend our above considerations of the well-distribution measure for sequences based on function from Hardy fields to these general polynomials.

\begin{thm}\label{thm:main}
  Let $\alpha_1,\alpha_2,\beta$ be reals and let
  \[
    f(x)=\beta\left\lfloor \alpha_1\left\lfloor \alpha_2 p(x)\right\rfloor\right\rfloor,
  \]
  where $p\in\RR[x]$ is a monic polynomial of degree $d\geq2$. Furthermore we
  denote by $E_N=\left(e_n\right)_{n=1}^N$ with $e_n=\chi(f(n))$ the
  corresponding pseudorandom binary sequence. Suppose that
  \[
    (\alpha_2,\alpha_1\alpha_2,\alpha_1\alpha_2\beta)
  \]
  is of finite type $t$. Then there exists $\eta=\eta(f,t)>0$ depending only on
  $f$ and $t$ such that
  \[
    W(E_N)\ll N^{1-\eta+\varepsilon},
  \]
  where the implied constant depends on $f$, $t$ and $\varepsilon$.
\end{thm}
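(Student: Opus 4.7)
The proof follows the standard three-step reduction to polynomial exponential sums. \textbf{Step~1 (Reduction).} Combining the bound $|U(E_N,M,a,b)| \leq 2 D_M((f(am+b))_{m=1}^M)$ given in the introduction with the Erd\H{o}s--Tur\'an inequality reduces the problem to the uniform estimate of
\[
  S(h) = \sum_{m=1}^M e\bigl(h f(am+b)\bigr), \qquad 1 \leq h \leq H,
\]
where $H$ is chosen as a small positive power of $M$. We may additionally restrict to $M \geq N^{1-\eta}$ (the complementary range is trivial), which forces $a \leq (N-1)/(M-1) \ll N^{\eta}$.

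\textbf{Step~2 (Unfolding the nested floors).} Applying $\lfloor x \rfloor = x - \{x\}$ twice yields
\[
  h f(n) = h\beta\alpha_1\alpha_2\, p(n) - h\beta\alpha_1\,\{\alpha_2 p(n)\} - h\beta\,\bigl\{\alpha_1\alpha_2 p(n) - \alpha_1\,\{\alpha_2 p(n)\}\bigr\},
\]
so $e(hf(n))$ splits as a product of three factors, the last two being of the form $e(-\theta\{y\})$. Such a function is $1$-periodic in $y$ with Fourier coefficients of size $\ll 1/(|k|+1)$, and following the approach of Hofer--Ramar\'e \cite{hofer_ramare2016:discrepancy_estimates_some} and Mukhopadhyay--Ramar\'e--Viswanadham \cite{mukhopadhyay_ramare_viswanadham2018:discrepancy_estimates_generalized} for the single-floor case, it can be truncated at length $K$ with a controllable pointwise error via a Vaaler-type smoothing. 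Iterating the expansion once more to handle the inner fractional part appearing inside the outer factor, $S(h)$ reduces to a weighted triple sum over $|k_1|, |k_2|, |k_3| \leq K$ of the pure polynomial exponential sums
\[
  T(\gamma) = \sum_{m=1}^M e\bigl(\gamma\, p(am+b)\bigr), \qquad \gamma = h\beta\alpha_1\alpha_2 + (k_1+k_3)\,\alpha_2 + k_2\,\alpha_1\alpha_2,
\]
with overall weights of size $\ll (\log K)^3 / \bigl((|k_1|+1)(|k_2|+1)(|k_3|+1)\bigr)$.

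\textbf{Step~3 (Weyl and finite type).} Since $p$ is monic of degree $d$, the polynomial $\gamma\, p(am+b)$ has leading coefficient $\gamma a^d$ in $m$. Dirichlet's theorem provides coprime $p', q$ with $q \leq M^{d-1}$ and $\|q\gamma a^d\| \leq 1/M^{d-1}$. Applying the finite-type hypothesis to the integer linear combination
\[
  qa^d\gamma = hqa^d \cdot (\alpha_1\alpha_2\beta) + (k_1+k_3)qa^d \cdot \alpha_2 + k_2qa^d \cdot (\alpha_1\alpha_2)
\]
forces $q \gg M^{\kappa}$ up to polynomial factors in $a, h$ and the $k_i$, with $\kappa = \kappa(d,t) > 0$. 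Weyl's inequality then yields $|T(\gamma)| \ll M^{1-\tau+\varepsilon}$ for some $\tau = \tau(d,t)>0$, with implied constant polynomial in $a, h, |k_i|$. Summing against the weights from Step~2 and choosing $H$ and $K$ as sufficiently small positive powers of $M$ delivers $D_M \ll M^{-\eta+\varepsilon}$ and hence $W(E_N) \ll N^{1-\eta+\varepsilon}$.

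\textbf{Main obstacle.} The heart of the proof is Step~3. Because the two nested floor functions inject three independent frequency directions $\alpha_2$, $\alpha_1\alpha_2$, $\alpha_1\alpha_2\beta$ into $\gamma$, controlling $\|q\gamma a^d\|$ simultaneously over $a \ll N^{\eta}$, $h \leq H$ and $|k_i| \leq K$ requires precisely the three-variable finite-type hypothesis in the statement; under any weaker simultaneous Diophantine assumption the denominator $q$ might remain bounded and the Weyl saving would collapse. A secondary technical difficulty is that the Fourier series of $y \mapsto e(-\theta\{y\})$ converges only conditionally, so the truncation error must be analysed carefully; this is where Vaaler's trigonometric approximation, adapted from the indicator function to the complex exponential, enters.
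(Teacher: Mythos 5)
Your overall architecture matches the paper's: Erd\H{o}s--Tur\'an, unfold the nested floors into $1$-periodic factors $e(-\theta\{y\})$, Fourier-expand and truncate, separate the weights from the polynomial exponential sums by H\"older, and finish with the finite-type hypothesis on $(\alpha_2,\alpha_1\alpha_2,\alpha_1\alpha_2\beta)$. Two of your choices differ harmlessly from the paper's. You expand both floors at once and obtain a triple sum over $k_1,k_2,k_3$; the paper iterates, expanding the outer floor first and the inner one inside each resulting term, which gives only a double sum with frequencies $h\alpha_1\alpha_2\beta-k_1\alpha_1\alpha_2-k_2\alpha_2$. And in Step~3 you use Dirichlet plus Weyl's inequality to force the denominator to be large, where the paper (Lemma~\ref{lem:type_t_exponential_sum}) instead runs Weyl differencing down to a linear sum and bounds $\sum_m\min\left(N,\left\|d!\,\gamma a^d m\right\|^{-1}\right)$ via the discrepancy of $(m\xi)$ and the finite-type lower bound. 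Both routes work and give some positive exponent saving.

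The genuine gap is in Step~2. The error in truncating the Fourier series of $y\mapsto e(-\theta\{y\})$ (or in replacing it by any smooth approximant) is \emph{not} controllable pointwise: near each integer the discontinuity forces an $O(1)$ error regardless of how the approximant is built, and Vaaler's construction provides a majorant/minorant for the real sawtooth, not for this complex-valued function. The error can only be controlled \emph{on average} over $n$, and that requires knowing how often $\alpha_2 p(an+b)$ (respectively $\alpha_1\lfloor\alpha_2 p(an+b)\rfloor$) lands near an integer, i.e.\ a discrepancy bound for the inner sequences themselves. This is exactly why the paper is organised as a bootstrap: Proposition~\ref{prop:discrepancy_basis_case} (no floor) controls the smoothing error $R_2$ in the one-floor case (Proposition~\ref{prop:discrepancy_first_iteration}) via Lemma~\ref{lem:approximation_by_G}, and Proposition~\ref{prop:discrepancy_first_iteration} in turn controls $R_1$ in the two-floor case (Proposition~\ref{prop:discrepancy_second_iteration}). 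Your proposal needs this induction on the number of floors made explicit; without it the truncation step does not close. A second, minor, imprecision: the Fourier coefficients of $e(-\theta\{y\})$ are of size $\ll\min\left(1,\abs{k+\theta}^{-1}\right)$, not $\ll(\abs{k}+1)^{-1}$, so for $\abs{k}\lesssim\abs{\theta}\leq H$ the weights are not individually small; what actually gets used is an $\ell^{p}$-type bound as in Lemma~\ref{lem:p-norm_Fourier_coefficients} (or an $\ell^1$ bound at the cost of $\log(K+H)$), uniformly in the parameter-dependent frequency $\theta$.
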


Note that we may calculate $\eta$ explicitly for a given function $f$ and finite
type $t$. However, since this exponent is far from the expected one, we omit the
details to the reader.

\section{Utilities}

\subsection{Discrepancy estimates}

As we already mentioned above the well-distribution measure is in strong
relation with the discrepancy of a sequence modulo $1$. A standard way of
estimating is the Erd\H{o}s-Turán inequality.
\begin{lem}[\textsc{Erd\H{o}s-Turán}]
  \label{lem:erdos-turan}
  For any integers $N>0$, $H>0$, and any sequence $(u_n)_{n=1}^N$ of $N$ real numbers we have
  \[
    D_N\left((u_n)_{n=1}^N\right)\leq\frac2{H+1}
      + 2\sum_{h=1}^H\frac1h\abs{\frac1N\sum_{n=1}^N e(hu_n)}.
  \]
\end{lem}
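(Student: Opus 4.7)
The plan is to approximate the indicator function of each subinterval of $[0,1)$ by a trigonometric polynomial of degree at most $H$ from above and below, and then to exploit these sandwiches to turn the counting error into a sum involving only the exponential sums $\frac{1}{N}\sum_{n=1}^N e(hu_n)$.

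First I would reduce the supremum defining $D_N$ to one-sided intervals. Since any $[a,b)\subset[0,1)$ can be written as $[0,b)\setminus[0,a)$, a triangle inequality shows that it suffices to bound
\[
  \sup_{\alpha\in[0,1]}\abs{\frac{1}{N}\#\{1\le n\le N:\{u_n\}\in[0,\alpha)\}-\alpha}
\]
by half of the stated quantity. Fixing $\alpha$, I would invoke the classical Beurling--Selberg / Vaaler extremal construction: for every $H\ge1$ there exist real-valued $1$-periodic trigonometric polynomials $S_H^\pm$ of degree at most $H$ satisfying $S_H^-(x)\le 1_{[0,\alpha)+\ZZ}(x)\le S_H^+(x)$ for all $x\in\RR$, together with
\[
  \int_0^1\bigl(S_H^+(x)-S_H^-(x)\bigr)\,dx\le\frac{2}{H+1},\qquad
  \abs{\widehat{S_H^\pm}(h)}\le\frac{1}{\pi\abs{h}}
  \le\frac{1}{\abs{h}}\quad(0<\abs{h}\le H),
\]
and $\widehat{S_H^\pm}(h)=0$ for $\abs{h}>H$. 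A concrete such construction can be built from Vaaler's function, itself assembled from $\pi x\cot(\pi x)$ and a Beurling-type minimal majorant of the signum function.

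Once these are in hand, plug in $u_n$ and average over $n$. Setting $T_h:=\frac{1}{N}\sum_{n=1}^N e(hu_n)$, and using $\widehat{S_H^+}(0)\ge\alpha\ge\widehat{S_H^-}(0)$ together with the integral gap $\le 2/(H+1)$, one gets
\[
  \frac{1}{N}\sum_{n=1}^N 1_{[0,\alpha)}(\{u_n\})-\alpha
  \le \frac{2}{H+1}+\sum_{0<\abs{h}\le H}\widehat{S_H^+}(h)\,T_{-h},
\]
and symmetrically a lower bound via $S_H^-$. Taking absolute values, using $\abs{\widehat{S_H^\pm}(h)}\le 1/\abs{h}$, and pairing the contributions of $h$ and $-h$ produces the factor $2/h$ appearing in the statement. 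Taking the supremum over $\alpha$ and recalling the initial reduction yields the claimed inequality.

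The main obstacle is the existence of the extremal polynomials $S_H^\pm$ with the simultaneous control $\int(S_H^+-S_H^-)\ll 1/H$ and $\abs{\widehat{S_H^\pm}(h)}\ll 1/\abs{h}$; after this ingredient, the remaining steps are straightforward Fourier manipulations. One could alternatively avoid the Beurling--Selberg machinery by convolving a piecewise-linear upper and lower approximant of $1_{[0,\alpha)}$ (with transition width $\sim 1/H$) against the Fej\'er kernel $K_H$, at the cost of slightly worse but still admissible constants.
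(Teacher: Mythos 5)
The paper does not actually prove this lemma; its ``proof'' is a pointer to Lemma 2.5 of Kuipers--Niederreiter, with Rivat--Tenenbaum cited for sharper constants. So any genuine argument you give is automatically a different route. The route you chose --- sandwiching the indicator of an interval between trigonometric polynomials of degree at most $H$ and reading the discrepancy off the Fourier coefficients --- is the standard modern proof and is exactly the mechanism by which Rivat and Tenenbaum obtain the optimal constants; the classical Kuipers--Niederreiter argument is a more elementary Fej\'er-kernel computation closer to your fallback remark. Your overall structure (reduction to intervals $[0,\alpha)$, the sandwich $S_H^-\le 1_{[0,\alpha)}\le S_H^+$, averaging over $n$, pairing $h$ with $-h$) is sound and would prove an inequality of the stated shape.

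There is, however, one quantitative over-claim that prevents the sketch from delivering the constants exactly as stated. The Beurling--Selberg--Vaaler construction does \emph{not} provide majorants with $\int_0^1(S_H^+-S_H^-)\le\frac{2}{H+1}$ \emph{and} $\abs{\widehat{S_H^\pm}(h)}\le\frac{1}{\pi\abs{h}}$ simultaneously: the documented bound on the nonzero coefficients is $\abs{\widehat{S_H^\pm}(h)}\le\frac{1}{H+1}+\min\bigl(\abs{I},\frac{1}{\pi\abs{h}}\bigr)$, with an extra $\frac{1}{H+1}$ per frequency. This extra term is harmless for the \emph{shape} of the inequality, since $\frac{1}{H+1}\le\frac{1}{h}$ for $1\le h\le H$, but tracking it through your argument (pairing $\pm h$ and then doubling again in the reduction $D_N\le 2D_N^*$) gives a coefficient of $\frac{4}{h}\bigl(1+\frac{1}{\pi}\bigr)$ in front of $\abs{T_h}$, where $T_h=\frac1N\sum_{n=1}^N e(hu_n)$, rather than the stated $\frac{2}{h}$. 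For every use in this paper only the order of magnitude matters, so the defect is cosmetic; but as a verbatim proof of the lemma you should either weaken the claimed coefficient bound and accept a larger absolute constant, apply the majorant directly to two-sided intervals to save one factor of $2$, or cite the sharper coefficient estimates (Vaaler, or Rivat--Tenenbaum) explicitly --- obtaining the constant $2$ is precisely the nontrivial content of that reference.
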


\begin{proof}
  For a proof see Lemma 2.5 of Kuipers and Niederreiter
  \cite{kuipers_niederreiter1974:uniform_distribution_sequences} (for smaller
  coefficients see also Rivat and Tenenbaum
  \cite{rivat_tenenbaum2005:constantes_derd_os}).
\end{proof}

For sequences of the form $u_n=n\theta$ with real $\theta$ we the following
estimate will be more useful.
\begin{lem}
  \label{lem:discrepancy_nearest_integer}
  Let $\theta\in\RR\setminus\QQ$ be an irrational number. Then for any positive integers $L\geq1$ and $J\geq1$ the discrepancy of the sequence $(\ell\theta)_{\ell=1}^{L}$ satisfies
  \[
    D_L\left(\left(\ell\theta\right)_{\ell=1}^L\right)
    \leq C\left(\frac{1}{J}+\frac{1}{L}\sum_{j=1}^J\frac{1}{j\norm{j\theta}}\right),
  \]
  where $C$ is an absolute constant.
\end{lem}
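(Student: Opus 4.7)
The plan is to apply the Erdős-Turán inequality (Lemma \ref{lem:erdos-turan}) directly with the parameter $H$ set equal to $J$. This reduces the problem to estimating the exponential sums
\[
  S_h = \sum_{\ell=1}^L e(h\ell\theta)
\]
for each $1\le h\le J$. Once suitable pointwise bounds on $|S_h|$ are in hand, substitution into the Erdős-Turán bound should yield the claim almost mechanically.

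For the key step, I would observe that $S_h$ is a geometric progression. Summing explicitly gives
\[
  \abs{S_h} = \abs{\frac{\sin(\pi L h\theta)}{\sin(\pi h\theta)}} \leq \frac{1}{\abs{\sin(\pi h\theta)}}.
\]
Using the elementary inequality $|\sin(\pi x)|\geq 2\norm{x}$ for all real $x$, this yields
\[
  \abs{S_h}\leq \frac{1}{2\norm{h\theta}}.
\]
Note that because $\theta$ is irrational, $\norm{h\theta}>0$ for every $h\geq 1$, so no division-by-zero issue arises.

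Plugging this bound into Lemma \ref{lem:erdos-turan} with $H=J$ yields
\[
  D_L\left(\left(\ell\theta\right)_{\ell=1}^L\right)
  \leq \frac{2}{J+1} + 2\sum_{h=1}^J\frac{1}{h}\cdot\frac{1}{L}\cdot\frac{1}{2\norm{h\theta}}
  = \frac{2}{J+1} + \frac{1}{L}\sum_{h=1}^J\frac{1}{h\norm{h\theta}},
\]
which gives the conclusion upon choosing $C$ to be an absolute constant (e.g. $C=2$) and noting that $\frac{2}{J+1}\leq \frac{2}{J}$. There is no real obstacle here; the one thing to be careful about is to correctly pass from $|\sin(\pi h\theta)|$ to $\norm{h\theta}$ via the standard inequality, and to relabel $H$ by $J$ when invoking the Erdős-Turán bound. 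The only subtle point conceptually is that the first term $1/J$ in the stated bound controls the tail of Fourier frequencies one chooses to ignore, while the second term captures the contribution of low frequencies through the Diophantine quality of $\theta$.
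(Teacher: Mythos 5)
Your proof is correct: the paper simply cites Kuipers--Niederreiter (Lemma 3.2) for this statement, and your argument --- Erd\H{o}s--Tur\'an with $H=J$, the geometric-series evaluation of $\sum_{\ell\le L}e(h\ell\theta)$, and the inequality $\abs{\sin(\pi x)}\ge 2\norm{x}$ --- is exactly the standard proof given there. All steps check out, and $C=2$ indeed suffices.
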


\begin{proof}
  This is \cite[Lemma 3.2]{kuipers_niederreiter1974:uniform_distribution_sequences}.
\end{proof}

\subsection{Floor function}

In the description of general polynomials, the floor function is allowed.
However, we will rewrite the floor function using the fractional part since this
is a $1$-periodic function. For reals $x,\tau\in\RR$ we write
\begin{gather}\label{eq:F_definition}
  F(x,\tau)=e\left(\tau\{x\}\right).
\end{gather}
Since the fractional part is discontinuous at the integers, we use convolution
for smoothing and introduce 
\begin{gather}\label{eq:Gr_definition}
  G_r(x,\tau,\delta)=\frac{1}{(2\delta)^r}
  \left(1\!\!1_{[-\delta,\delta]} \star \cdots \star 1\!\!1_{[-\delta,\delta]}
  \star F(x,\tau)\right),
\end{gather}
where $r\geq1$ is an integer and $\delta>0$. The following lemma deals with the
error we have in considering $G$ instead of $F$.
\begin{lem}
  \label{lem:approximation_by_G}
  For any sequence $\{u_n\}_{n\geq0}$ of real numbers, and any positive integer $N$ we have
  \[
    \sum_{0\leq n<N}\abs{F(u_n,\tau)-G_r(u_n,\tau,\delta)}
    \ll Nr\delta+Nr^2\delta\abs{\tau}+ND_{N}(u_n).
    \]
\end{lem}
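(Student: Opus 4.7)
The plan is to split the indices $0\le n<N$ according to whether $\{u_n\}$ lies within distance $r\delta$ of an integer (and hence near the jumps of $F(\cdot,\tau)$) or safely inside a region of smoothness. The discrepancy controls the size of the bad set, while a Fourier-transform computation handles the good one.

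First, I would let $B=[0,r\delta)\cup(1-r\delta,1)\subset[0,1)$, the $r\delta$-neighbourhood of $\ZZ$ seen on the unit interval. Since $B$ has Lebesgue measure $2r\delta$, the defining inequality for the discrepancy gives
\[
\#\bigl\{0\le n<N:\{u_n\}\in B\bigr\}\le 2Nr\delta+ND_N((u_n)).
\]
Each factor $\tfrac{1}{2\delta}1\!\!1_{[-\delta,\delta]}$ in \eqref{eq:Gr_definition} is a probability density, so their $r$-fold convolution is too, and hence $\|G_r\|_\infty\le\|F\|_\infty\le 1$. The trivial bound $|F-G_r|\le 2$ therefore suffices on $B$, which contributes $O(Nr\delta+ND_N)$ in total.

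For the remaining indices, i.e.\ those with $\{u_n\}\in[r\delta,1-r\delta]$, the key observation is that the fractional part is linear on the relevant neighbourhood: for every $s_1,\dots,s_r\in[-\delta,\delta]$ one has $\{u_n-s_1-\cdots-s_r\}=\{u_n\}-s_1-\cdots-s_r$, and therefore $F(u_n-s,\tau)=F(u_n,\tau)e(-\tau s)$. Inserting this into \eqref{eq:Gr_definition} pulls $F(u_n,\tau)$ out of the convolution, leaving the Fourier transform of the box kernel:
\[
G_r(u_n,\tau,\delta)=F(u_n,\tau)\left(\frac{\sin(2\pi\tau\delta)}{2\pi\tau\delta}\right)^r.
\]
Using $|1-z^r|\le r|1-z|$ for $|z|\le 1$ together with the elementary inequality $|1-\sin(x)/x|\ll|x|$, one obtains $|F(u_n,\tau)-G_r(u_n,\tau,\delta)|\ll r\delta|\tau|$, and summing over the at most $N$ good indices contributes $O(Nr\delta|\tau|)$, which is even a little stronger than the $Nr^2\delta|\tau|$ appearing in the statement.

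The only slightly delicate step is the floor-function bookkeeping isolating the good region: one has to verify that a shift by $s_1+\cdots+s_r$ with $|s_i|\le\delta$ does not push the fractional part of $u_n$ across an integer, and this is exactly what restricts the smooth region to $\{u_n\}\in[r\delta,1-r\delta]$ and forces the use of the discrepancy on the complement. Everything else is either the trivial pointwise bound coming from the kernel being a probability density, or a one-line sinc estimate.
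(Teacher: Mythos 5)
Your proof is correct, and it is worth noting that the paper does not actually prove Lemma~\ref{lem:approximation_by_G} at all: it simply cites Lemma~10 of Hofer and Ramar\'e, so your self-contained argument is a genuine addition rather than a reproduction. The structure you use --- splitting $[0,1)$ into the $r\delta$-neighbourhood of $\ZZ$, whose hitting count is controlled by $N\cdot\mathrm{meas}(B)+O(ND_N)$ together with the trivial bound $\abs{F-G_r}\le 2$, and the complementary region where $\{u_n-s\}=\{u_n\}-s$ for all $\abs{s}\le r\delta$ so that $G_r(u_n,\tau,\delta)=F(u_n,\tau)\bigl(\sin(2\pi\tau\delta)/(2\pi\tau\delta)\bigr)^r$ --- is the standard route to such smoothing estimates, and your chain $\abs{1-z^r}\le r\abs{1-z}$ followed by $\abs{1-\sin x/x}\ll\abs{x}$ is valid for real $\abs{z}\le1$. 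In fact you obtain the sharper term $Nr\delta\abs{\tau}$ in place of $Nr^2\delta\abs{\tau}$, which of course implies the stated bound. Two cosmetic points you should still address: the identity $\{u_n\}-s_1-\cdots-s_r=\{u_n-s_1-\cdots-s_r\}$ can fail at the single boundary configuration $\{u_n\}=1-r\delta$, $s_1=\cdots=s_r=-\delta$, but this is a Lebesgue-null set of $(s_1,\ldots,s_r)$ and hence harmless for the convolution integral (alternatively, take the good region open); and applying the discrepancy bound separately to the two intervals composing $B$ yields $2ND_N$ rather than $ND_N$, which is immaterial for the $\ll$ conclusion. You might also remark that when $r\delta\ge\tfrac12$ the good region is empty and the bound is trivial, so no generality is lost.
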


\begin{proof}
  This is Lemma 10 of Hofer and Ramaré
  \cite{hofer_ramare2016:discrepancy_estimates_some}.
\end{proof}

Now, as usual, we denote by $\widehat{G_r}$ the (discrete) Fourier transform of $G_r$ (with respect to $x$):
\[
  G_r(x,\tau,\delta)=\sum_{k\in\ZZ} \widehat{G_r}(k,\tau,\delta) e\left(-kx\right).
\]
The properties of this transformation are twofold. On the one hand we use the
following lemma to truncate the infinite sum.

\begin{lem}\label{lem:large_Fourier_coefficients}
  Let $K$ be a positive integer such that $\abs{\tau+k}\geq\frac{k}{2}$ for $k\in\ZZ$ with $\abs{k}>K$. Then
  \[
    \sum_{\abs{k}>K}\widehat{G_r}(k,\tau,\delta)\ll\left(\delta K\right)^{-r}.
  \]
\end{lem}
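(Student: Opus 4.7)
My plan is to compute $\widehat{G_r}$ explicitly, so that the tail bound reduces to summing a convergent series.

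First I would identify $\widehat{F}$. Since $F(\cdot,\tau)$ is $1$-periodic with $F(x,\tau)=e(\tau x)$ on $[0,1)$, a direct integration gives
\[
  \widehat{F}(k,\tau)=\int_0^1 e(\tau x)e(kx)\,dx=\frac{e(\tau)-1}{2\pi i(\tau+k)},
\]
so that $\lvert \widehat{F}(k,\tau)\rvert\le\frac{1}{\pi\lvert \tau+k\rvert}$. Next, because $G_r(\cdot,\tau,\delta)$ is the convolution of $F(\cdot,\tau)$ with $r$ copies of $\frac{1}{2\delta}1\!\!1_{[-\delta,\delta]}$, its Fourier coefficients factor as
\[
  \widehat{G_r}(k,\tau,\delta)=\widehat{F}(k,\tau)\left(\frac{\sin(2\pi k\delta)}{2\pi k\delta}\right)^{r}.
\]

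Using the elementary bound $\lvert\sin t/t\rvert\le 1/\lvert t\rvert$ for $t\neq 0$, I obtain
\[
  \lvert \widehat{G_r}(k,\tau,\delta)\rvert\le \frac{1}{(2\pi\lvert k\rvert\delta)^r}\cdot\frac{1}{\pi\lvert \tau+k\rvert}.
\]
Now I invoke the hypothesis $\lvert \tau+k\rvert\ge\lvert k\rvert/2$ for $\lvert k\rvert>K$, which turns the second factor into a bound by $2/(\pi\lvert k\rvert)$, giving
\[
  \lvert \widehat{G_r}(k,\tau,\delta)\rvert \ll \frac{1}{\delta^{r}\lvert k\rvert^{r+1}}
\]
uniformly in $\tau$.

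Summing over $\lvert k\rvert>K$ produces $\sum_{\lvert k\rvert>K}\lvert k\rvert^{-(r+1)}\ll K^{-r}$ since $r\ge 1$, which combines with the $\delta^{-r}$ factor to yield the claimed bound $(\delta K)^{-r}$. There is no serious obstacle here; the only slight subtlety is ensuring the sinc estimate is applied in a form valid for all $k$ (which the crude inequality $\lvert\sin t\rvert\le 1$ handles), and being consistent with the Fourier normalisation fixed by the statement $G_r(x,\tau,\delta)=\sum_{k}\widehat{G_r}(k,\tau,\delta)e(-kx)$.
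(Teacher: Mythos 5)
Your computation is correct and self-contained: the Fourier coefficient of $F(\cdot,\tau)$ is indeed $\widehat{F}(k,\tau)=\frac{e(\tau)-1}{2\pi i(\tau+k)}$ under the paper's convention, the convolution structure of $G_r$ turns its coefficients into $\widehat{F}(k,\tau)$ times the $r$-th power of a sinc factor bounded by $(2\pi|k|\delta)^{-1}$, and the hypothesis $\abs{\tau+k}\geq\abs{k}/2$ for $\abs{k}>K$ then gives $\abs{\widehat{G_r}(k,\tau,\delta)}\ll \delta^{-r}\abs{k}^{-(r+1)}$, whose tail sum is $\ll(\delta K)^{-r}$; note that you actually prove the stronger bound with absolute values inside the sum. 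The paper itself offers no argument here --- it simply cites Lemma 3 of Mukhopadhyay, Ramar\'e and Viswanadham --- so your proof fills in the standard computation that the reference contains, and I see no gap in it.
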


\begin{proof}
  This is Lemma 3 of Mukhopadhyay \textit{et al.} \cite{mukhopadhyay_ramare_viswanadham2018:discrepancy_estimates_generalized}.
\end{proof}

On the other hand the second lemma considers the case of higher moments of the
Fourier coefficients.

\begin{lem}\label{lem:p-norm_Fourier_coefficients}
  Let $\tau\in\RR$ and $0<\delta<\min\left(\frac{1}{2\abs{\tau}},1\right)$. Then for $p>1$ we have
  \[
    \sum_{k\in\ZZ}\abs{\widehat{G_r}(k,\tau,\delta)}^p \ll_p 1.
  \]
\end{lem}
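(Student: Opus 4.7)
My plan is to compute the Fourier coefficients of $G_r$ exactly via the convolution theorem, and then reduce the $\ell^p$-bound to a standard estimate for $\sum_k \min(1,|\tau+k|^{-1})^p$. Since $G_r(\cdot,\tau,\delta)$ is the $r$-fold convolution of the $1$-periodic function $F(\cdot,\tau)$ with the compactly supported kernel $\frac{1}{2\delta}1\!\!1_{[-\delta,\delta]}$, the convolution theorem gives
\[
  \widehat{G_r}(k,\tau,\delta)
  =\left(\frac{\sin(2\pi k\delta)}{2\pi k\delta}\right)^{r}\widehat{F}(k,\tau),
\]
with the sinc factor read as~$1$ at $k=0$. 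Because $\lvert\sin(2\pi k\delta)/(2\pi k\delta)\rvert\leq 1$, it suffices to prove the stronger statement $\sum_{k\in\ZZ}\lvert\widehat{F}(k,\tau)\rvert^{p}\ll_p 1$.

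I would then evaluate $\widehat{F}(k,\tau)$ directly: using that $F(x,\tau)=e(\tau x)$ on $[0,1)$, one has
\[
  \widehat{F}(k,\tau)
  =\int_0^1 e((\tau+k)x)\,dx
  =\frac{e(\tau)-1}{2\pi i(\tau+k)},
\]
which, combined with the trivial bound $\lvert\widehat{F}(k,\tau)\rvert\leq 1$, yields
\[
  \lvert\widehat{F}(k,\tau)\rvert\leq\min\!\left(1,\frac{1}{\pi|\tau+k|}\right).
\]
The sum $\sum_{k\in\ZZ}\min(1,1/(\pi|\tau+k|))^{p}$ is uniformly bounded in $\tau$: at most one integer~$k$ lies in the saturated regime $\pi|\tau+k|\leq 1$ (the interval has length $2/\pi<1$), contributing at most~$1$; ordering the remaining values of $|\tau+k|$ increasingly, the tail is dominated by $2\sum_{n\geq 1}(\pi n)^{-p}$, which converges for $p>1$. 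Summing produces the required $O_p(1)$ bound.

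The argument is essentially routine; the only subtlety I anticipate is bookkeeping with the normalization $(2\delta)^{-r}$ so that it cancels exactly against the $r$ copies of $\widehat{1\!\!1_{[-\delta,\delta]}}(k)=\sin(2\pi k\delta)/(\pi k)$, leaving a bounded sinc power. I note that the hypothesis $\delta<\min(1/(2\abs{\tau}),1)$ does not actually intervene in this $\ell^p$-estimate; it is carried along because it is the natural regime under which $G_r$ is deployed in the companion Lemmas~\ref{lem:approximation_by_G} and~\ref{lem:large_Fourier_coefficients}, and it is compatible with the sinc bound above.
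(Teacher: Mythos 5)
Your argument is correct. Note that the paper does not actually prove this lemma: it is quoted verbatim as Lemma~4 of Mukhopadhyay, Ramar\'e and Viswanadham \cite{mukhopadhyay_ramare_viswanadham2018:discrepancy_estimates_generalized}, so what you have written is a self-contained substitute for that citation rather than an alternative to an in-paper argument. Your route is the natural one and surely close to the cited source's: by the convolution theorem $\widehat{G_r}(k,\tau,\delta)=\bigl(\tfrac{\sin(2\pi k\delta)}{2\pi k\delta}\bigr)^{r}\widehat{F}(k,\tau)$, the sinc factors have modulus at most $1$, and the explicit computation $\abs{\widehat{F}(k,\tau)}\leq\min\bigl(1,\tfrac{1}{\pi\abs{\tau+k}}\bigr)$ reduces everything to a sum that is uniformly bounded in $\tau$ for $p>1$. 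Two minor remarks. First, your closing numerical domination is slightly off as stated: after discarding the at most one $k$ with $\pi\abs{\tau+k}\leq 1$, the nearest remaining point on the opposite side of $-\tau$ can lie at distance as small as $1-1/\pi$, whose contribution $(\pi-1)^{-p}$ exceeds $(\pi\cdot 1)^{-p}$, so the literal bound $2\sum_{n\geq1}(\pi n)^{-p}$ is not quite an upper bound; bounding the $n$-th remaining distance below by $n/2$ (say) repairs this without affecting the $O_p(1)$ conclusion. Second, your observation that the hypothesis $0<\delta<\min(1/(2\abs{\tau}),1)$ plays no role is accurate: your proof gives the estimate for every $\delta>0$, and the hypothesis is indeed only the regime in which $G_r$ is used alongside Lemmas~\ref{lem:approximation_by_G} and~\ref{lem:large_Fourier_coefficients}.
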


\begin{proof}
  This is Lemma 4 of Mukhopadhyay \textit{et al.} \cite{mukhopadhyay_ramare_viswanadham2018:discrepancy_estimates_generalized}.
\end{proof}

\subsection{Exponential sum estimates}

Above we stated the Erd\H{o}s-Turán inequality (Lemma \ref{lem:erdos-turan})
turning the estimation of the discrepancy into one of exponential sums. The aim
here is Lemma~\ref{lem:type_t_exponential_sum} below. However, we need some
tools for its proof. The first deals with the classical idea of Weyl differencing. 

\begin{lem}\label{lem:weyl_differencing}
  Suppose that $\lambda_1,\lambda_2,\ldots,\lambda_N$ is a sequence of complex numbers, each with $\abs{\lambda_i}\leq 1$, and define $\Delta \lambda_m=\lambda_m$, $\Delta_r\lambda_m=\lambda_{m+r}\overline{\lambda_m}$ and
  \[
    \Delta_{r_1,\ldots,r_k,s}\lambda_m=\left(\Delta_{r_1,\ldots,r_k}\lambda_{m+s}\right)\overline{\left(\Delta_{r_1,\ldots,r_k}\lambda_m\right)}.
  \]
  Then for any given $k\geq1$, and real number $Q\in[1,N]$,
  \[
    \abs{\frac{1}{8N}\sum_{m=1}^N\lambda_m}^{2^k}
    \leq \frac{1}{8Q}+\frac{1}{8Q^{2-2^{-k+1}}}
    \sum_{r_1=1}^Q\sum_{r_2=1}^{Q^{\frac12}}\cdots\sum_{r_k=1}^{Q^{2^{-k+1}}}\abs{\frac{1}{N}\sum_{m=1}^{N-r_1-\cdots-r_k}
    \Delta_{r_1,\ldots,r_k}\lambda_m}.
  \]
\end{lem}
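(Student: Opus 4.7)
The plan is to prove this iterated differencing bound by induction on $k$, using van der Corput's inequality as the base case and alternating Cauchy--Schwarz with a further van der Corput application in the inductive step.

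For the base case $k=1$, I would apply the classical van der Corput trick. Extending $\lambda_j$ by zero outside $\{1,\ldots,N\}$ and writing
\[
  HS=\sum_{h=1}^{H}\sum_{m=1-h}^{N-h}\lambda_{m+h}=\sum_{m}\sum_{h=1}^{H}\lambda_{m+h},
\]
Cauchy--Schwarz in $m$ (over $N+H$ values) gives $H^2|S|^2 \leq (N+H)\sum_m\abs{\sum_h \lambda_{m+h}}^2$. Expanding the square and separating the diagonal contribution (bounded by $HN$ via $|\lambda_j|\leq 1$) from the off-diagonal pairs, which after a change of variable $r=h_1-h_2$ collect into first differences $\Delta_r\lambda_m$, yields
\[
  |S|^2 \leq \frac{(N+H)N}{H}+\frac{2(N+H)}{H}\sum_{r=1}^{H-1}\abs{\sum_m\Delta_r\lambda_m}.
\]
With $H=Q\leq N$ and dividing by $(8N)^2=64N^2$, this recovers the case $k=1$ of the stated bound with slack in the constants.

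For the inductive step from $k-1$ to $k$, I would first square both sides of the inductive hypothesis via $(a+b)^2\leq 2a^2+2b^2$, then apply Cauchy--Schwarz to the squared multi-sum over $(r_1,\ldots,r_{k-1})$, picking up a factor
\[
  R_{k-1}=Q\cdot Q^{1/2}\cdots Q^{2^{-k+2}}=Q^{2-2^{-k+2}}
\]
equal to the number of summands. Next I would apply van der Corput once more to each inner quantity $\abs{\frac{1}{N}\sum_m\Delta_{r_1,\ldots,r_{k-1}}\lambda_m}^2$ with differencing parameter $H=Q^{2^{-k+1}}$, using that iterated differences of sequences bounded by $1$ are still bounded by $1$. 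The fresh variable $r_k$ then ranges over $[1,Q^{2^{-k+1}}]$ as demanded by the statement, and the exponent bookkeeping collapses via $(2-2^{-k+2})+2^{-k+1}=2-2^{-k+1}$ to produce the denominator $Q^{2-2^{-k+1}}$ in the multi-sum term.

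The main obstacle is not any single analytic step but the combinatorial bookkeeping of constants and exponents: after combining the factor $R_{k-1}=Q^{2-2^{-k+2}}$ from Cauchy--Schwarz with $1/H=Q^{-2^{-k+1}}$ from van der Corput, one has to check that the various ``diagonal'' error contributions of sizes $Q^{-2}$, $Q^{-2^{-k+1}}$ and the like can all be absorbed, after adjusting the implicit constants and restricting $Q$ to the admissible range $[1,N]$, into the single leading term $\frac{1}{8Q}$. Tracking the numerical constants so that the inductive bound closes on itself with the same prefactor $1/8$ at every stage is the most delicate piece; once that is done, no further analytic idea is required.
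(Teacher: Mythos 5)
The paper does not actually prove this lemma; it only cites \cite[Lemma 8.3]{granville_ramare1996:explicit_bounds_on} (via \cite{graham_kolesnik1991:van_der_corputs}), so your attempt is a reconstruction from scratch. Your base case is fine, but the inductive step has a genuine gap, and it sits exactly at the point you flag as ``delicate bookkeeping'': the diagonal contribution of size $Q^{-2^{-k+1}}$ \emph{cannot} be absorbed into the term $\frac{1}{8Q}$ by adjusting constants, because $Q^{-2^{-k+1}}/Q^{-1}=Q^{1-2^{-k+1}}\to\infty$ as $Q\to\infty$ for every $k\geq 2$. Concretely: in your scheme the final van der Corput application, with differencing parameter $H=Q^{2^{-k+1}}$, is applied to the $(k-1)$-fold differenced sums \emph{after} all squarings have been performed, so its diagonal term $\asymp 1/H$ enters the final bound to the first power. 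Multiplying by the number of tuples $(r_1,\ldots,r_{k-1})$ and by the accumulated prefactor coming from squaring the inductive hypothesis and applying Cauchy--Schwarz, everything cancels except a factor $\asymp Q^{-2^{-k+1}}$; already for $k=2$ this leaves an error term $\asymp Q^{-1/2}$, which exceeds $\frac{1}{8Q}$ for large $Q$. The induction therefore does not close in the form you describe.

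The cure is to reverse the order of the differencings. If van der Corput with parameter $H_j$ is used at the $j$-th level, its diagonal enters the final normalized bound as $\asymp H_j^{-2^{k-j}}$, because everything produced at level $j$ is subsequently squared $k-j$ more times; to keep every diagonal at $O(1/Q)$ one must take $H_j=Q^{2^{j-k}}$, i.e.\ the \emph{shortest} range first and the full range $Q$ last --- the opposite of your arrangement. The correct induction applies van der Corput to $\sum_m\lambda_m$ with the short parameter $Q^{2^{-k+1}}$, uses $(a+b)^{2^{k-1}}\leq 2^{2^{k-1}-1}\bigl(a^{2^{k-1}}+b^{2^{k-1}}\bigr)$ so that this diagonal is raised to the power $2^{k-1}$ and becomes $\asymp Q^{-1}$, applies H\"older over the $\lfloor Q^{2^{-k+1}}\rfloor$ new shift parameters, and only then invokes the inductive hypothesis, \emph{with parameter $Q$}, for the once-differenced sequences $\Delta_r\lambda$. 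The ranges then come out permuted relative to the statement, but $\Delta_{r_1,\ldots,r_k}\lambda_m$ is symmetric under permutations of $(r_1,\ldots,r_k)$, so a relabelling recovers the lemma as stated; with these choices the constants do close with the prefactor $\frac18$ at every stage.
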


\begin{proof}
  This is a variant of a lemma of Weyl-van der Corput (see \cite[Lemma 2.7]{graham_kolesnik1991:van_der_corputs}) as given in \cite[Lemma 8.3]{granville_ramare1996:explicit_bounds_on}.
\end{proof}

Weyl differencing turns the sum into a linear one. The second tool connects this
linear exponential sum with approximation properties of the coefficient.

\begin{lem}
  \label{lem:linear_exponential_sum}
  For every real number $\alpha$ and all integers $N_1<N_2$,
  \[\sum_{n=N_1+1}^{N_2}e(\alpha n)
  \ll \min\left(N_2-N_1,\lVert\alpha\rVert^{-1}\right).\]
\end{lem}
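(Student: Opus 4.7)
The plan is to prove this classical bound by treating two cases separately and then taking the minimum. The first bound $\bigl|\sum_{n=N_1+1}^{N_2}e(\alpha n)\bigr|\leq N_2-N_1$ is immediate from the triangle inequality together with $\abs{e(\alpha n)}=1$. So the work is entirely in the second bound $\abs{\sum_{n=N_1+1}^{N_2}e(\alpha n)}\ll \lVert\alpha\rVert^{-1}$.

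For the second bound, I would first dispense with the case $\alpha\in\ZZ$: then $\lVert\alpha\rVert=0$ and the right-hand side is interpreted as $+\infty$, so the estimate is trivially true. Otherwise $e(\alpha)\neq 1$ and the geometric series formula applies:
\[
  \sum_{n=N_1+1}^{N_2}e(\alpha n)
  =e(\alpha(N_1+1))\,\frac{e(\alpha(N_2-N_1))-1}{e(\alpha)-1}.
\]
The numerator has modulus at most $2$, so it remains to give a lower bound on $\abs{e(\alpha)-1}$ in terms of $\lVert\alpha\rVert$.

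For the denominator, writing $\abs{e(\alpha)-1}=2\abs{\sin(\pi\alpha)}$ and using that $\sin(\pi\alpha)$ depends on $\alpha$ only modulo $1$, together with the elementary inequality $\abs{\sin(\pi t)}\geq 2\abs{t}$ for $t\in[-\tfrac12,\tfrac12]$ applied to $t=\alpha-[\alpha]$, gives $\abs{e(\alpha)-1}\geq 4\lVert\alpha\rVert$. Consequently
\[
  \abs{\sum_{n=N_1+1}^{N_2}e(\alpha n)}\leq \frac{2}{\abs{e(\alpha)-1}}\leq\frac{1}{2\lVert\alpha\rVert},
\]
which is the desired $\ll\lVert\alpha\rVert^{-1}$ bound. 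Combining with the trivial bound then yields the $\min$ on the right. There is no genuine obstacle here; the only point requiring a moment's care is making the statement meaningful when $\alpha\in\ZZ$.
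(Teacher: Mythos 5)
Your proof is correct and is exactly the standard geometric-series argument: the paper itself gives no proof, merely citing Lemma 4.7 of Nathanson, whose proof is the same computation (trivial bound by the triangle inequality, geometric summation, and the lower bound $\abs{e(\alpha)-1}=2\abs{\sin(\pi\alpha)}\geq 4\lVert\alpha\rVert$). Nothing to add.
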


\begin{proof}
  This is Lemma 4.7 of Nathanson \cite{nathanson1996:additive_number_theory}.
\end{proof}

Finally we need a link between the sum of the minima and the discrepancy of $n\alpha$-sequences.

\begin{lem}
  \label{lem:sum_of_minima}
  Let $\xi\in\RR$ and let $L\in\NN^*$ be a positive integer. Then
  \[
    \sum_{\ell=1}^L\min\left(N,\left\|\ell\xi\right\|^{-1}\right)
    \ll L\log N\left(1+N D_L\left(\left(\ell\xi\right)_{\ell=1}^L\right)\right).
  \]
\end{lem}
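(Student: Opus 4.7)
The strategy is a dyadic decomposition of the sum according to the size of $\|\ell\xi\|$, combined with the definition of the discrepancy to count how many indices $\ell$ fall in each dyadic window.

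First I would cut off at $J=\lceil\log_2 N\rceil$ and partition $\{1,\ldots,L\}$ into level sets
\[
  S_0=\{\ell\colon \|\ell\xi\|<2^{-J}\},\qquad
  S_j=\{\ell\colon 2^{-j-1}\leq \|\ell\xi\|<2^{-j}\}\quad (1\leq j\leq J).
\]
The summand $\min(N,\|\ell\xi\|^{-1})$ is then trivially bounded by $N$ on $S_0$ and by $2^{j+1}$ on $S_j$. The point of the cutoff at $2^{-J}\asymp N^{-1}$ is precisely that the two pointwise bounds agree there.

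Next I would estimate $|S_j|$. Each condition describes $\{\ell\xi\}$ lying in a union of two subintervals of $[0,1)$ whose total length is comparable to $2^{-j}$ (respectively $2^{-J}$ for $S_0$). Applying the definition of the discrepancy to these intervals directly yields
\[
  |S_j|\ll 2^{-j}L+LD_L,\qquad |S_0|\ll L/N+LD_L,
\]
where $D_L=D_L((\ell\xi)_{\ell=1}^L)$.

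Combining the pointwise and counting bounds, the contribution of $S_j$ is $\ll L+2^{j}LD_L$, and the contribution of $S_0$ is $\ll L+NLD_L$. Summing the $O(\log N)$ main terms $L$ produces the factor $L\log N$, while the geometric series $\sum_{j=0}^{J}2^{j}\ll 2^{J}\asymp N$ produces the $NLD_L$ part, giving the claimed $L\log N(1+ND_L)$. There is no real obstacle; the only care needed is to place the dyadic cutoff at the right level so that the two geometric sums (trivial bound below $2^{-J}$, and $2^j$ above) balance against the bound $N$ appearing inside the minimum.
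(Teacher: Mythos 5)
Your proof is correct and follows essentially the same route as the paper: decompose $\{1,\dots,L\}$ into level sets according to the size of $\|\ell\xi\|$, bound the summand on each level set pointwise, and count the level sets via the definition of the discrepancy. The only difference is that you use $O(\log N)$ dyadic windows where the paper uses $N$ windows of width $1/N$ (summing the resulting harmonic series); your variant even saves a factor $\log N$ on the $ND_L$ term, which is more than the lemma requires.
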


\begin{proof}
  We divide the interval $[0,1]$ into $N$ parts and denote by $E_n$ the number
  of elements $\norm{\ell \xi}$ that fall in the $n$-th, \textit{i.e.} for
  $0\leq n<N$ we set
  \[
    E_n:=\#\left\{ \ell\leq L\colon \frac{n}{N}<\norm{\ell\xi}\leq\frac{n+1}{N}\right\}.
  \]
  Clearly we have
  \begin{align*}
    \sum_{\ell=1}^{L}\min\left(N,\frac{1}{\norm{\ell\xi}}\right)
    &=NE_0+\sum_{\ell\not\in E_0}\frac{1}{\norm{\ell\xi}}
    \leq NE_0+\sum_{n=1}^{N-1}\frac{N}{n}E_n.
  \end{align*}
  
  By the definition of discrepancy we obtain that
  \[
    E_n=\frac{2L}{N}+\mathcal{O}\left(L D_L\left(\left(\ell\xi\right)_{\ell=1}^L\right)\right).
  \]
  Plugging this into the sum of the minima yields the desired bound.
\end{proof}

\section{A discrepancy estimate}

We start our considerations with a first discrepancy estimate.

\begin{prop}
  \label{prop:discrepancy_basis_case}
  Let $p\in\RR[X]$ be a monic polynomial of degree $d\geq2$ and let $a\in\NN^*$
  and $b\in\ZZ$ be integers. Furthermore suppose that $\alpha$ is of finite type
  $t$ and that
  \[
    a\leq N^{\frac{2-2^{2-d}}{dt}}.
  \]
  Then for $\varepsilon>0$ 
  \[
    D_N\left(\left(\alpha p(an+b)\right)_{n=1}^N\right)
    \ll a^{\frac{dt}{2^{d-1}(t+1)+t}+\varepsilon}N^{-\frac{2-2^{2-d}}{2^{d-1}(2t+1)+t}+\varepsilon},
  \]
  where the implied constant depends on $\varepsilon$, $d$ and $t$.
\end{prop}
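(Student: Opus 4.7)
The plan is to combine Erd\H{o}s-Tur\'an, Weyl differencing, and the finite type hypothesis on $\alpha$. I would start by applying Lemma~\ref{lem:erdos-turan} with an auxiliary parameter $H$ to bound
\[
  D_N\bigl((\alpha p(an+b))_{n=1}^N\bigr) \ll \frac{1}{H} + \sum_{h=1}^H \frac{1}{h}\,A(h),\qquad A(h) := \abs{\frac{1}{N}\sum_{n=1}^N e(h\alpha p(an+b))}.
\]
The polynomial $p(an+b)$ has degree $d$ in $n$ with leading term $a^d n^d$, so I would next apply Weyl differencing (Lemma~\ref{lem:weyl_differencing}) with $k=d-1$ and a parameter $Q\in[1,N]$. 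After $d-1$ differencings, $A(h)^{2^{d-1}}$ is controlled by $1/Q$ plus an average over $(r_1,\dots,r_{d-1})$ of a linear exponential sum in $m$ with leading coefficient $h\alpha\,d!\,a^d r_1\cdots r_{d-1}$; Lemma~\ref{lem:linear_exponential_sum} then bounds each inner sum by $\min\bigl(N,\,\norm{h\alpha d!\,a^d r_1\cdots r_{d-1}}^{-1}\bigr)$.

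Now I would collapse the product structure to a single variable. The ranges $r_i\le Q^{2^{1-i}}$ have total volume $L := Q^{2-2^{2-d}}$, and each product $R=r_1\cdots r_{d-1}$ has at most $O(L^\varepsilon)$ representations by the divisor bound, so
\[
  \sum_{r_1,\dots,r_{d-1}} \min\bigl(N,\,\norm{h d!\,a^d R\alpha}^{-1}\bigr) \ll L^\varepsilon \sum_{R=1}^L \min\bigl(N,\,\norm{R\cdot h d!\,a^d\alpha}^{-1}\bigr).
\]
Lemma~\ref{lem:sum_of_minima} bounds the right-hand side by $L\log N\,(1 + ND_L)$, where $D_L$ is the discrepancy of $(R\cdot h d!\,a^d\alpha)_{R=1}^L$, and Lemma~\ref{lem:discrepancy_nearest_integer} together with the finite type hypothesis $\norm{jhd!a^d\alpha} \gg (jha^d)^{-t-\varepsilon}$ produces, after an optimal choice of the auxiliary parameter $J$ in Lemma~\ref{lem:discrepancy_nearest_integer}, a bound of the form $D_L \ll (ha^d)^{t/(t+1)+\varepsilon} L^{-1/(t+1)+\varepsilon}$.

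To finish, one substitutes back, chooses $Q$ (as a suitable power of $N$), and then optimizes $H$ in Erd\H{o}s-Tur\'an to balance the resulting terms. This yields a bound of the form $A(h) \ll N^{-c_1+\varepsilon}(ha^d)^{c_2+\varepsilon}$ with explicit constants $c_1,c_2$ depending on $d$ and $t$, which after summation against $1/h$ and optimization of $H$ produces the claimed estimate. The range condition $a\le N^{(2-2^{2-d})/(dt)}$ is precisely the threshold at which the resulting bound is better than the trivial $D_N\le 1$. The main technical obstacle is the careful bookkeeping of the exponents through the three nested optimizations (over $J$, $Q$, $H$) and through the repeated $\varepsilon$-losses; a secondary point is to verify that the lower-order terms in $m$ produced by iterated Weyl differencing, which carry coefficients depending on $(r_1,\dots,r_{d-1})$ and $b$, enter only as a phase and do not interfere with the application of Lemma~\ref{lem:linear_exponential_sum}.
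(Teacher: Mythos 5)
Your proposal is correct and follows essentially the same route as the paper: Erd\H{o}s--Tur\'an, then Weyl differencing with $k=d-1$, the linear exponential sum bound, collapsing the multiple sum via the divisor bound, the sum-of-minima lemma combined with the finite type hypothesis and an optimal $J$, and finally the choice of $H$. The only cosmetic difference is that you keep $Q$ as a free parameter where the paper simply takes $Q=N$ (packaging the middle steps as its Lemma~\ref{lem:type_t_exponential_sum} with $s=1$), and your remarks on the lower-order terms after differencing and on the role of the condition on $a$ are consistent with the paper's argument.
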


Although this seems very classic, because no floor function is involved in this
variant, we want to provide a proof here for the sake of completeness. This also
allows us to present and prove our general tool for the occurring exponential
sums.

\begin{lem}\label{lem:type_t_exponential_sum}
  Let $p$ be a monic polynomial of degree $d\geq2$ and $a\in\NN^*$ and $b\in\ZZ$
  be two integers. Furthermore suppose that $(\gamma_1,\ldots,\gamma_s)\in\RR^s$
  is of finite type $t$ with $t>0$.
  Then for $N$ sufficiently large we have
  \[
    \abs{\sum_{n=1}^N e\left(\left(k_1\gamma_1+\cdots+k_s\gamma_s\right)
    p(an+b)\right)}^{2^{d-1}}
    \ll
    \left(a^{sd}\abs{k_1\cdots k_s}\right)^{\frac{t}{st+1}+\varepsilon}
    N^{2^{d-1}-\frac{2-2^{2-d}}{st+1}+\varepsilon}.
  \]
\end{lem}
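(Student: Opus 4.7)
The plan is to follow the classical Weyl-differencing strategy, reducing the degree-$d$ polynomial phase to a linear one, and then to exploit the finite-type hypothesis on $(\gamma_1, \ldots, \gamma_s)$ via the discrepancy estimates collected above. Writing $\theta = k_1\gamma_1 + \cdots + k_s\gamma_s$ and $\lambda_n = e(\theta p(an+b))$, I would apply Lemma \ref{lem:weyl_differencing} with $k = d - 1$ and a parameter $Q \in [1, N]$ to be fixed later. Since $p$ is monic of degree $d$, after $d-1$ successive differencings with steps $r_1, \ldots, r_{d-1}$ the iterated difference $\Delta_{r_1,\ldots,r_{d-1}}[p(am+b)]$ becomes affine in $m$ with leading coefficient $d! \, a^d \, r_1 \cdots r_{d-1}$, so Lemma \ref{lem:linear_exponential_sum} bounds the inner sum by $\min(N, \norm{d! \, a^d \, r_1 \cdots r_{d-1} \, \theta}^{-1})$.

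Next, I would group the tuples $(r_1, \ldots, r_{d-1})$ by the product $R = r_1 \cdots r_{d-1}$; the number of such representations is $O(R^\varepsilon)$ by the standard divisor bound, and $R$ ranges up to $L := Q^{D}$ where $D := 2 - 2^{2-d}$. With $\xi = d! \, a^d \, \theta$, the resulting one-dimensional sum $\sum_{R \leq L} \min(N, \norm{R\xi}^{-1})$ is handled by Lemma \ref{lem:sum_of_minima}, which reduces the problem to estimating the discrepancy $D_L((R\xi)_{R=1}^L)$. Applying Lemma \ref{lem:discrepancy_nearest_integer} then turns this into $1/J + (1/L) \sum_{j \leq J} 1/(j \norm{j\xi})$, where the finite-type hypothesis enters directly: since $j\xi = \sum_i (j \, d! \, a^d k_i) \gamma_i$, one obtains $\norm{j\xi}^{-1} \ll (j a^d)^{s(t+\varepsilon)} |k_1 \cdots k_s|^{t+\varepsilon}$, and the sum telescopes to $\ll a^{sd(t+\varepsilon)} |k_1 \cdots k_s|^{t+\varepsilon} J^{s(t+\varepsilon)}$.

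Balancing the two terms in the discrepancy bound pins down the optimal choice $J \asymp L^{1/(1+s(t+\varepsilon))} (a^{sd} |k_1 \cdots k_s|)^{-(t+\varepsilon)/(1+s(t+\varepsilon))}$ and yields $D_L \ll L^{-1/(st+1)+\varepsilon} (a^{sd} |k_1 \cdots k_s|)^{t/(st+1)+\varepsilon}$. Substituting this back into the Weyl inequality with $L = Q^D$ produces a bound of the shape $N^{2^{d-1}}/Q + N^{2^{d-1}-1+\varepsilon} + N^{2^{d-1}} Q^{-D/(st+1)} (a^{sd}|k_1\cdots k_s|)^{t/(st+1)+\varepsilon}$, and taking $Q$ as large as the Weyl lemma permits collapses the first two terms into the last, giving the announced estimate. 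The main obstacle I anticipate is this joint optimization: the two decreasing functions of $Q$ (the $1/Q$ term from Weyl and the $Q^{-D/(st+1)}$ term from the discrepancy) must be compared in the two regimes $D \lessgtr st+1$, and one must also verify that the auxiliary parameter $J$ stays in the admissible range $[1, L]$ without losing essential factors, which is routine but must be checked carefully.
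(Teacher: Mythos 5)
Your proposal follows essentially the same route as the paper's proof: Weyl differencing (Lemma \ref{lem:weyl_differencing}), the linear bound of Lemma \ref{lem:linear_exponential_sum}, the divisor-function collapse of the multiple sum, then Lemma \ref{lem:sum_of_minima} combined with Lemma \ref{lem:discrepancy_nearest_integer} and the finite-type hypothesis, with the same optimal choice of $J$. The only difference is that the paper avoids your anticipated joint optimization in $Q$ by simply taking $Q=N$ from the outset, after which the $N^{2^{d-1}-1}$ term is absorbed into the main term since $2-2^{2-d}<st+1$.
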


\begin{proof}
  We start by an application of Lemma \ref{lem:weyl_differencing} with $Q=N$ to get
  \begin{multline*}
    \abs{\sum_{n=1}^N e\left(\left(k_1\gamma_1+\cdots+k_s\gamma_s\right)
    p(an+b)\right)}^{2^{d-1}}\\
    \ll N^{2^{d-1}-1}
    +N^{2^{d-1}+2^{2-d}-3}
    \sum_{r_1=1}^{N} \cdots \sum_{r_{d-1}=1}^{N^{2^{2-d}}}
    \abs{\sum_{n=1}^N
    e\left(d!\left(k_1\gamma_1+\cdots+k_s\gamma_s\right)a^d r_1\cdots r_{d-1}n\right) }.
  \end{multline*}

  Applying Lemma \ref{lem:linear_exponential_sum} to the innermost sum yields
  \begin{multline*}
    \abs{\sum_{n=1}^N e\left(\left(k_1\gamma_1+\cdots+k_s\gamma_s\right)
    p(an+b)\right)}^{2^{d-1}}\\
    \ll N^{2^{d-1}-1}
    +N^{2^{d-1}+2^{2-d}-3}
    \sum_{r_1=1}^{N} \cdots \sum_{r_{d-1}=1}^{N^{2^{2-d}}}
    \min\left(N,\norm{d!\left(k_1\gamma_1+\cdots+k_s\gamma_s\right)a^d r_1\cdots r_{d-1}}^{-1}\right).
  \end{multline*}

  We want to get rid of the multiple sum. Therefore we denote by $T(m)$ the number of representations of $m$ as product of $r_1\cdots r_{d-1}$, \textit{i.e.}
  \[
    T(m)=
    \abs{\left\{(r_1,\ldots,r_{d-1})\in
      [1,N]\times\cdots\times[1,N^{2^{2-d}}]\colon
      m=r_1\cdots r_{d-1}\right\}}.
  \]
  Thus
  \begin{multline*}
    \abs{\sum_{n=1}^N e\left(\left(k_1\gamma_1+\cdots+k_s\gamma_s\right)
    p(an+b)\right)}^{2^{d-1}}\\
    \ll N^{2^{d-1}-1}
    +N^{2^{d-1}+2^{2-d}-3}
    \sum_{m=1}^{N^{2-2^{2-d}}}T(m)
    \min\left(N,\norm{d!\left(k_1\gamma_1+\cdots+k_s\gamma_s\right)a^d m}^{-1}\right).
  \end{multline*}

  Using estimates for the divisor function (\textit{cf.} the proof of
  \cite[Lemma 4.14]{nathanson1996:additive_number_theory}) we get that $T(m)\ll
  m^{\varepsilon}\ll N^{\varepsilon}$ and therefore
  \begin{multline*}
    \abs{\sum_{n=1}^N e\left(\left(k_1\gamma_1+\cdots+k_s\gamma_s\right)
    p(an+b)\right)}^{2^{d-1}}\\
    \ll N^{2^{d-1}-1}
    +N^{2^{d-1}+2^{2-d}-3+\varepsilon}
    \sum_{m=1}^{N^{2-2^{2-d}}}\min\left(N,\norm{d!(k_1\gamma_1+\cdots+k_s\gamma_s)a^dm}^{-1}\right).
  \end{multline*}

  Now by Lemma \ref{lem:sum_of_minima} we get for the sum of minima that
  \begin{multline}\label{eq:9}
    \sum_{m=1}^{L}\min\left(N,\norm{d!(k_1\gamma_1+\cdots+k_s\gamma_s)a^dm}^{-1}\right)\\
    \ll L(\log N)\left(1+ND_L\left(\left(d!(k_1\gamma_1+\cdots+k_s\gamma_s)a^dm\right)_{m=1}^L\right)\right),
  \end{multline}
  where we have set $L=N^{2-2^{2-d}}$ for short.

  Since $(\gamma_1,\ldots,\gamma_s)$ is of finite type $t$, it exists $c=c(\varepsilon,\gamma_1,\ldots,\gamma_s)>0$ such that
  \[
    \norm{d!(k_1\gamma_1+\cdots+k_s\gamma_s)a^dm}
    \geq \frac{c(\varepsilon,\gamma_1,\ldots,\gamma_s)}
      {(d!a^dm)^{st+\varepsilon}\abs{k_1\cdots k_s}^{t+\varepsilon}}.
  \]
  Thus by Lemma \ref{lem:discrepancy_nearest_integer} we get for $J\geq1$ that
  \begin{align*}
    D_L\left(\left(d!(k_1\gamma_1+\cdots+k_s\gamma_s)a^dm\right)_{m=1}^L\right)
    &\ll \frac{1}{J}+\frac{1}{L}\sum_{j=1}^J j^{-1}(a^dj)^{st+\varepsilon}\abs{k_1\cdots k_s}^{t+\varepsilon}\\
    &\ll \frac{1}{J}+\frac{1}{L}(a^dJ)^{st+\varepsilon}\abs{k_1\cdots k_s}^{t+\varepsilon}.
  \end{align*}
  
  Choosing
  \[
    J= a^{-d\frac{st}{st+1}}\abs{k_1\cdots k_s}^{-\frac{t}{st+1}}L^{\frac{1}{st+1}}
  \]
  we get
  \[
    \sum_{m=1}^{L}\min\left(N,\left\|d!ma^d(h\alpha_2-k)\alpha\right\|^{-1}\right)
    \ll a^{d\frac{st}{st+1}+\varepsilon}
    \abs{k_1\cdots k_s}^{\frac{t}{st+1}+\varepsilon}
    N^{3-2^{2-d}-\frac{2-2^{2-d}}{st+1}+\varepsilon},
  \]
  which together with \ref{eq:9} proves the lemma.
\end{proof}



With this tool in hand we are able to prove the first estimation.

\begin{proof}[Proof of Proposition \ref{prop:discrepancy_basis_case}]
  The proof follows three steps. First, we use the Erd\H{o}s-Turán inequality
  which transforms the discrepancy estimate in an exponential sum estimate. Then
  we use Lemma~\ref{lem:type_t_exponential_sum}, our key lemma for the
  exponential sums. Finally we choose the parameter $H$ in the Erd\H{o}s-Turán
  inequality accordingly. This last step provides us with the bound on $a$.

  Starting with an application of the Erd\H{o}s-Turán inequality (Lemma \ref{lem:erdos-turan}) we obtain
  \begin{gather}\label{eq:3}
    D_N\left(\left(\alpha p(an+b)\right)_{n=1}^N\right)
    \ll \frac1{H}
    + \sum_{h=1}^H\frac1h\abs{\frac1N\sum_{n=1}^N e\left(h\alpha p(an+b)\right)},
  \end{gather}
  where $H\geq1$ is a parameter we will choose later.
  
  An application of Lemma \ref{lem:type_t_exponential_sum} yields
  \begin{gather*}
    \abs{\sum_{n=1}^Ne(h\alpha p(an+b))}^{2^{d-1}}
    \ll N^{2^{d-1}-\frac{2-2^{2-d}}{t+1}+\varepsilon}
      \left(a^dh\right)^{\frac{t}{t+1}+\varepsilon}.
  \end{gather*}

  Finally plugging everything into \eqref{eq:3} and setting
  \[
    H=\left\lceil N^{\frac{2-2^{2-d}}{2^{d-1}(2t+1)}}a^{\frac{dt}{2^{d-1}(2t+1)+t}}\right\rceil\geq1
  \]
  proves the proposition.
\end{proof}

\section{The first iteration}

Now we add the floor function to our generating function and establish the following proposition.

\begin{prop}\label{prop:discrepancy_first_iteration}
  Let $\alpha$ and $\beta$ be reals and let $p\in\RR[x]$ be a monic polynomial of degree $d\geq2$. Suppose that $(\alpha,\alpha\beta)$ is of finite type $t$ with $t>0$.
  Then
  \begin{align*}
    D_N\left(\left(\beta\left\lfloor\alpha p(an+b)\right\rfloor\right)_{n=1}^N\right)
    &\ll a^{\frac{2dt}{2^{d-1}(2t+1)+4t+1}}
    N^{-\frac{2-2^{2-d}}{2^{d-1}(2t+1)+7t+2}}.
  \end{align*}
\end{prop}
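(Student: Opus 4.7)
The plan is to adapt the Erd\H{o}s--Tur\'an plus Fourier-smoothing strategy from \cite{hofer_ramare2016:discrepancy_estimates_some,mukhopadhyay_ramare_viswanadham2018:discrepancy_estimates_generalized} and plug in our Lemma \ref{lem:type_t_exponential_sum} to handle the resulting linear-in-the-coefficients exponential sums. Starting from the Erd\H{o}s--Tur\'an inequality (Lemma \ref{lem:erdos-turan}) with a parameter $H$, I am reduced to bounding
\[
S(h) = \sum_{n=1}^N e\bigl(h\beta \lfloor \alpha p(an+b) \rfloor\bigr)
\]
for $1 \leq h \leq H$. Using $\lfloor y \rfloor = y - \{y\}$ and the definition \eqref{eq:F_definition} of $F$, I first rewrite the summand as
\[
e\bigl(h\beta \lfloor \alpha p(an+b)\rfloor\bigr) = e\bigl(h\alpha\beta\, p(an+b)\bigr)\, F\bigl(\alpha p(an+b),\, -h\beta\bigr),
\]
converting the floor function into a $1$-periodic factor amenable to Fourier analysis.

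Next I would approximate $F$ by the smoothed kernel $G_r$ of \eqref{eq:Gr_definition} via Lemma \ref{lem:approximation_by_G}, whose error term involves $D_N\bigl((\alpha p(an+b))_{n=1}^N\bigr)$; this discrepancy is bounded by Proposition \ref{prop:discrepancy_basis_case}, applicable because $(\alpha,\alpha\beta)$ of finite type $t$ trivially implies $\alpha$ of finite type $t$. Expanding $G_r$ in a Fourier series then converts $S(h)$ into
\[
\sum_{k \in \ZZ} \widehat{G_r}(k, -h\beta, \delta) \sum_{n=1}^N e\bigl((h\alpha\beta - k\alpha) p(an+b)\bigr) + \text{(error)}.
\]
I would truncate the $k$-sum at $|k| \leq K$ using Lemma \ref{lem:large_Fourier_coefficients}, the tail contributing $(\delta K)^{-r}$. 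On the truncated range I apply Lemma \ref{lem:type_t_exponential_sum} with $s=2$, $(\gamma_1,\gamma_2)=(\alpha,\alpha\beta)$, and coefficients $(-k,h)$; the degenerate case $k=0$ is handled by the same lemma with $s=1$ and $\gamma_1=\alpha\beta$, whose finite type $t$ is inherited from the hypothesis. This gives, for each admissible $(h,k)$, a bound of shape
\[
\abs{\sum_{n=1}^N e\bigl((h\alpha\beta - k\alpha) p(an+b)\bigr)} \ll \bigl(a^{2d}|kh|\bigr)^{\frac{t}{2^{d-1}(2t+1)}+\varepsilon} N^{1 - \frac{2-2^{2-d}}{2^{d-1}(2t+1)}+\varepsilon}.
\]

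To recombine, I would invoke Lemma \ref{lem:p-norm_Fourier_coefficients} (via a H\"older split on the $k$-sum) so that the weights $\widehat{G_r}(k,-h\beta,\delta)$ are absorbed into an absolute constant, and then sum over $1\leq h \leq H$ in the Erd\H{o}s--Tur\'an bound. What remains is the optimization of the four parameters $H,K,\delta,r$. The hard part will be this balancing: $\delta$ must be small enough that the smoothing error $Nr\delta + Nr^2\delta H|\beta|$ is negligible, yet $\delta K$ must be large to kill the Fourier tail $(\delta K)^{-r}$, while $K$ is constrained by $K \gtrsim H|\beta|$ (to meet the hypothesis of Lemma \ref{lem:large_Fourier_coefficients}). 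Choosing $r$ sufficiently large in terms of $\varepsilon$, then tuning $H$ and $K$ against the exponents furnished by Lemma \ref{lem:type_t_exponential_sum} and Proposition \ref{prop:discrepancy_basis_case}, should yield exactly the claimed exponents $\tfrac{2dt}{2^{d-1}(2t+1)+4t+1}$ in $a$ and $-\tfrac{2-2^{2-d}}{2^{d-1}(2t+1)+7t+2}$ in $N$.
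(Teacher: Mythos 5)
Your proposal is correct and follows essentially the same route as the paper: Erd\H{o}s--Tur\'an, rewriting the floor via $F$, smoothing by $G_r$ with the error controlled through Lemma \ref{lem:approximation_by_G} and Proposition \ref{prop:discrepancy_basis_case}, Fourier truncation via Lemma \ref{lem:large_Fourier_coefficients}, a H\"older split combined with Lemma \ref{lem:p-norm_Fourier_coefficients}, and Lemma \ref{lem:type_t_exponential_sum} with $s=2$ applied to $(\alpha,\alpha\beta)$, followed by the same parameter balancing. Your explicit attention to the $k=0$ term and to the constraint $K\gtrsim H|\beta|$ from the hypothesis of Lemma \ref{lem:large_Fourier_coefficients} are points the paper passes over silently, but they do not change the argument.
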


\begin{proof}
  By Lemma \ref{lem:erdos-turan} we get that
  \begin{equation}\label{eq:8}
      D_N\left(\left(\beta\left\lfloor\alpha p(an+b)\right\rfloor\right)_{n=1}^N\right)
      \ll \frac1{H}
      + \sum_{h=1}^H\frac1h\abs{\frac1N\sum_{n=1}^N e\left(h\beta\left\lfloor\alpha p(an+b)\right\rfloor\right)},
  \end{equation}
  where $H\geq1$ is a positive integer we choose later.

  As above we concentrate on the exponential sum. Using our definitions of $F$ and
  $G_r$ with integer $r\geq1$ and real $\delta>0$ in \eqref{eq:F_definition} and
  \eqref{eq:Gr_definition}, respectively, we obtain
  \begin{equation}\label{eq:7}
    \begin{split}
      \sum_{n=1}^N e\left(h\beta\left\lfloor\alpha p(an+b)\right\rfloor\right)
      &=\sum_{n=1}^N e\left(h\beta\alpha p(an+b)\right) F\left(\alpha p(an+b),-h\beta\right)\\
      &=\sum_{n=1}^N e\left(h\beta\alpha p(an+b)\right) G_r\left(\alpha p(an+b),-h\beta,\delta\right)
      +\mathcal{O}\left(R\right),
    \end{split}
  \end{equation}
  where
  \[
    R=\sum_{n=1}^N\abs{F(\alpha p(an+b),-h\beta)-G_r(\alpha p(an+b),-h\beta,\delta)}.
  \]

  Using Lemma \ref{lem:approximation_by_G} together with the estimate of the
  distribution in Proposition \ref{prop:discrepancy_basis_case} we get for the
  error term $R$ that
  \begin{align*}
    R
    &\ll Nr\delta+Nr^2\delta\abs{h\beta}+ND_N\left(\left(\alpha p(an+b)\right)_{n=1}^N\right)\\
    &\ll Nr\delta+Nr^2\delta\abs{h\beta}
      +a^{\frac{dt}{2t+1}+\varepsilon}N^{1-\frac{2-2^{2-d}}{2^{d-1}(2t+1)}+\varepsilon}.
  \end{align*}

  We return to the exponential sum. Fourier transforming $G_r$ we get
  \begin{equation}\label{eq:6}
    \begin{split}
      &\sum_{n=1}^Ne\left(h\beta\alpha p(an+b)\right)
        G_r\left(\alpha p(an+b),-h\beta,\delta\right)\\
      &\quad=\sum_{k\in\ZZ} \widehat{G_r}(k,-h\beta,\delta) \sum_{n=1}^Ne\left((h\beta-k)\alpha p(an+b)\right)\\
      &\quad=\sum_{\abs{k}\leq K} \widehat{G_r}(k,-h\beta,\delta) \sum_{n=1}^Ne\left((h\beta-k)\alpha p(an+b)\right)
      +\mathcal{O}\left(N(\delta K)^{-r}\right),
    \end{split}
  \end{equation}
  where we used Lemma \ref{lem:large_Fourier_coefficients} in the last step.

  Again we concentrate on the weighted exponential sum. The idea is to apply
  Lemma \ref{lem:type_t_exponential_sum}. Using Holder's inequality we separate
  the Fourier coefficient and the exponential sum. Thus
  \begin{align*}
    &\sum_{\abs{k}\leq K} \widehat{G_r}(k,-h\beta,\delta) \sum_{n=1}^Ne\left((h\beta-k)\alpha p(an+b)\right)\\
    &\quad\ll \left(\sum_{\abs{k}\leq K}\abs{\widehat{G_r}(k,-h\beta,\delta)}^{\frac{2^{d-1}}{2^{d-1}-1}}\right)^{\frac{2^{d-1}-1}{2^{d-1}}}\left(\sum_{\abs{k}\leq K}\abs{\sum_{n=1}^N e\left((h\beta-k)\alpha p(an+b)\right)}^{2^{d-1}}\right)^{\frac{1}{2^{d-1}}}\\
    &\quad\ll \left(\sum_{\abs{k}\leq K}\abs{\sum_{n=1}^N e\left((h\beta-k)\alpha p(an+b)\right)}^{2^{d-1}}\right)^{\frac{1}{2^{d-1}}},
  \end{align*}
  where, this time, we used Lemma \ref{lem:p-norm_Fourier_coefficients} in the last step.

  Since the vector $(\alpha,\alpha\beta)$ is of finite type $t$, an application
  of Lemma \ref{lem:type_t_exponential_sum} yields
  \[
    \abs{\sum_{n=1}^Ne((h\beta-k)\alpha p(an+b))}^{2^{d-1}}\\
    \ll a^{d\frac{2t}{2t+1}+\varepsilon}
    \abs{hk}^{\frac{t}{2t+1}+\varepsilon}
    N^{2^{d-1}-\frac{2-2^{2-d}}{2t+1}+\varepsilon}.
  \]
  Summing over $k$ we get
  \begin{multline*}
    \sum_{\abs{k}\leq K} \widehat{G_r}(k,-h\beta,\delta) \sum_{n=1}^Ne\left((h\beta-k)\alpha p(an+b)\right)\\
    \quad\ll a^{\frac{d}{2^{d-1}}\frac{2t}{2t+1}+\varepsilon}
    \abs{h}^{\frac{t}{2^{d-1}(2t+1)}+\varepsilon}
    K^{\frac{3t+1}{2^{d-1}(2t+1)}+\varepsilon}
    N^{1-\frac{2-2^{2-d}}{2^{d-1}(2t+1)}+\varepsilon}.
  \end{multline*}

  Let $\rho>0$ and $\theta>0$ be real parameters, which we will choose in an
  instant. Then we set
  \[
    \delta^{-1}=hN^{\theta}
    \quad\text{and}\quad
    K=h^\rho N^{\theta},
  \]
  Plugging every estimate we have so far in \eqref{eq:6} and then in \eqref{eq:7} and \eqref{eq:8} yields
  \begin{align*}
    D_N\left(\left(\beta\left\lfloor\alpha p(an+b)\right\rfloor\right)_{n=1}^N\right)
    &\ll a^{\frac{d}{2^{d-1}}\frac{2t}{2t+1}+\varepsilon}
    H^{\frac{t}{2^{d-1}(2t+1)}+\rho\frac{3t+1}{2^{d-1}(2t+1)}+\varepsilon}
    N^{-\frac{2-2^{2-d}}{2^{d-1}(2t+1)}+\theta\frac{3t+1}{2^{d-1}(2t+1)}+\varepsilon}\\
    &\quad+H^{r(1-\rho)}
    + H^{-1}+N^{-\theta}+N^{-\theta}\log H
    +a^{d\frac{t}{2t+1}+\varepsilon}N^{-\frac{2-2^{2-d}}{2^{d-1}(2t+1)}+\varepsilon}.
  \end{align*}

  Now we choose $\rho=1+\varepsilon_1$, with
  $\varepsilon_1=\varepsilon_1(\varepsilon,t)>0$ sufficiently small, and $r$ an
  integer such that $r>\frac{1}{\varepsilon_1}$. Then clearly $H^{r(1-\rho)}\ll
  H^{-1}$. Finally we set $H=a^{-\sigma}N^{\theta}$ with
  \[
    \sigma=\frac{2dt}{2^{d-1}(2t+1)+4t+1}
    \quad\text{and}\quad
    \theta=\frac{2-2^{2-d}}{2^{d-1}(2t+1)+7t+2}
  \]
  yielding the desired estimate.
\end{proof}

\section{The second iteration}

Now we iterate the process one step further and provide the following.

\begin{prop}\label{prop:discrepancy_second_iteration} Let $\alpha_1$, $\alpha_2$
  and $\beta$ be reals and let $p\in\RR[x]$ be a monic polynomial of degree
  $d\geq2$. Suppose that the vector $(\alpha_2, \alpha_1\alpha_2,
  \alpha_1\alpha_2\beta)$ is of finite type $t$ with $t>0$. Then
  \[
    D_N\left(\left(\beta\left\lfloor\alpha_1\left\lfloor\alpha_2 p(an+b)
      \right\rfloor\right\rfloor\right)_{n=1}^N\right)
    \ll
    a^{\frac{3dt}{2^{d}(3t+1)+5t+1}}
    N^{-\frac{(2-2^{2-d})2^{d-1}(3t+1)}{(2^{d-1}(3t+1)+21t+5)(2^{d}(3t+1)+4t+1)}}.
  \]
\end{prop}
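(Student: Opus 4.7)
The plan is to iterate the strategy of Proposition \ref{prop:discrepancy_first_iteration}, applying the Erdős--Turán inequality and then peeling off the two floor functions one at a time via the smoothing machinery of Section 2.2. Because two nested floors now appear, the relevant exponential sums will depend on the three real frequencies $\alpha_2$, $\alpha_1\alpha_2$ and $\alpha_1\alpha_2\beta$, and the full strength of the finite-type hypothesis will be invoked through the $s=3$ case of Lemma \ref{lem:type_t_exponential_sum}. The hypothesis on the triple automatically implies that $\alpha_2$ alone is of finite type $t$ (so Proposition \ref{prop:discrepancy_basis_case} is available) and that $(\alpha_2,\alpha_1\alpha_2)$ is of finite type $t$ (so Proposition \ref{prop:discrepancy_first_iteration} is available), by setting the extra indices to zero in the definition of finite type.

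After an application of Lemma \ref{lem:erdos-turan}, I peel off the outer floor by writing $e(h\beta\lfloor y\rfloor)=e(h\beta y)F(y,-h\beta)$ with $y=\alpha_1\lfloor\alpha_2 p(an+b)\rfloor$, replacing $F$ by $G_r$ through Lemma \ref{lem:approximation_by_G}, Fourier-expanding $G_r$, and truncating the expansion to $|k|\leq K$ via Lemma \ref{lem:large_Fourier_coefficients}. The approximation error introduces the discrepancy of $(\alpha_1\lfloor\alpha_2 p(an+b)\rfloor)_{n=1}^N$, controlled by Proposition \ref{prop:discrepancy_first_iteration}. On each inner sum $\sum_n e((h\beta-k)\alpha_1\lfloor\alpha_2 p(an+b)\rfloor)$ I repeat the same construction with $z=\alpha_2 p(an+b)$, introducing new parameters $r',\delta',K'$; the resulting approximation error brings in the discrepancy of $(\alpha_2 p(an+b))_{n=1}^N$, controlled by Proposition \ref{prop:discrepancy_basis_case}. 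The $k$-dependent piece of this inner error must be summed against $\sum_{|k|\leq K}|\widehat{G_r}(k,\cdot)|$, which I bound by Hölder combined with Lemma \ref{lem:p-norm_Fourier_coefficients} at some exponent $p$ slightly larger than $1$, yielding only a harmless $K^\varepsilon$ loss.

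After both expansions the main term reduces to a weighted double sum over $|k|\leq K$ and $|\ell|\leq K'$ of
\[
\sum_{n=1}^N e\bigl((h\beta\alpha_1\alpha_2-k\alpha_1\alpha_2-\ell\alpha_2)p(an+b)\bigr).
\]
I now apply Hölder in the $(k,\ell)$-sum with conjugate exponents $2^{d-1}/(2^{d-1}-1)$ and $2^{d-1}$. The Fourier factor is absorbed using Lemma \ref{lem:p-norm_Fourier_coefficients}, because $\sum_{k,\ell}|\widehat{G_r}(k)\widehat{G_{r'}}(\ell)|^p$ factors into two independent sums, each $\ll 1$. The $(2^{d-1})$-th moment of the inner exponential sum is estimated by Lemma \ref{lem:type_t_exponential_sum} with $s=3$, applied to the frequencies $(\gamma_1,\gamma_2,\gamma_3)=(\alpha_2,\alpha_1\alpha_2,\alpha_1\alpha_2\beta)$ and the index vector $(-\ell,-k,h)$; this is exactly the place where the full finite-type hypothesis enters. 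Summing the resulting bound over $|k|\leq K$, $|\ell|\leq K'$ and extracting the $(2^{d-1})$-th root gives the principal contribution.

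To close, I parametrize the smoothing and truncation parameters $\delta,K,\delta',K'$ as suitable monomials in $h$ and $N$, with the exponents on $h$ chosen slightly larger than $1$ (as for $\rho$ in the first iteration), take $r,r'$ to be sufficiently large integers, and set $H=a^{-\sigma}N^\theta$. Choosing $\sigma$, $\theta$ and the inner exponents to balance the main term against the two truncation errors $N(\delta K)^{-r}$ and $N(\delta'K')^{-r'}$ and against the two discrepancy inputs from Propositions \ref{prop:discrepancy_basis_case} and \ref{prop:discrepancy_first_iteration} produces the quoted formula. The main obstacle is precisely this joint optimization of four smoothing/truncation parameters, together with the verification that the compounded $k$- and $h$-dependence of the inner error, once summed against the outer Fourier coefficients, does not dominate the main contribution; this is a mechanical but delicate computation and accounts for the ungainly appearance of the final exponents.
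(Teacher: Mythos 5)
Your proposal follows essentially the same route as the paper: Erdős--Turán, two successive peelings of the floor functions via $F$ and the smoothed $G_r$ with Fourier truncation, Hölder against Lemma \ref{lem:p-norm_Fourier_coefficients}, the $s=3$ case of Lemma \ref{lem:type_t_exponential_sum} applied to the triple $(\alpha_2,\alpha_1\alpha_2,\alpha_1\alpha_2\beta)$ for the main term, Propositions \ref{prop:discrepancy_basis_case} and \ref{prop:discrepancy_first_iteration} for the two smoothing errors, and a final balancing of $\delta_i,K_i,r_i,H$ (including the inner parameters' dependence on the outer Fourier index, which you correctly flag). This matches the paper's decomposition into the terms $S_1,\dots,S_5$ and its parameter choices, so the plan is sound.
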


\begin{proof}
  Again we start with the Erd\H{o}s-Turán inequality: For $H\geq1$ we obtain
  \begin{gather}\label{eq:13}
    D_N\left(\left(\beta\left\lfloor\alpha_1\left\lfloor\alpha_2 p(an+b)
    \right\rfloor\right\rfloor\right)_{n=1}^N\right)
    \ll
    \frac1H+
    \sum_{h=1}^H\frac1h\abs{\frac1N\sum_{n=1}^n
    e\left(h\beta\left\lfloor\alpha_1\left\lfloor\alpha_2 p(an+b)
    \right\rfloor\right\rfloor\right)}.
  \end{gather}
  
  Now we focus on the innermost exponential sum and use the approximation of the
  fractional part as above to get for an integer $r_1\geq1$ and a real
  $\delta_1>0$ (which we will fix later) that
  \begin{equation}\label{eq:10}
    \begin{split}
      &\sum_{n=1}^n
      e\left(h\beta\left\lfloor\alpha_1\left\lfloor\alpha_2 p(an+b)
      \right\rfloor\right\rfloor\right)\\
      &\quad=\sum_{n=1}^n
      e\left(h\beta\alpha_1\left\lfloor\alpha_2 p(an+b)
      \right\rfloor\right)F(\alpha_1\left\lfloor\alpha_2 p(an+b)
      \right\rfloor,-h\beta)\\
      &\quad=\sum_{n=1}^n
      e\left(h\beta\alpha_1\left\lfloor\alpha_2 p(an+b)
      \right\rfloor\right)G_{r_1}(\alpha_1\left\lfloor\alpha_2 p(an+b)
      \right\rfloor,-h\beta,\delta_1)+\mathcal{O}(R_1),
    \end{split}
  \end{equation}
  where
  \begin{gather}\label{eq:R_1}
    R_1=\sum_{n=1}^N
      \abs{F(\alpha_1\left\lfloor\alpha_2 p(an+b)\right\rfloor,-h\beta)
      - G_{r_1}(\alpha_1\left\lfloor\alpha_2 p(an+b)\right\rfloor,-h\beta,\delta_1)}.
  \end{gather}

  Again we use the Fourier series expansion of $G_{r_1}$ to get
  \begin{align*}
    &\sum_{n=1}^n
    e\left(h\beta\alpha_1\left\lfloor\alpha_2 p(an+b)
    \right\rfloor\right)G_{r_1}(\alpha_1\left\lfloor\alpha_2 p(an+b)
    \right\rfloor,-h\beta,\delta_1)\\
    &\quad=\sum_{k_1\in\ZZ}
    \widehat{G_{r_1}}(k_1,-h\beta,\delta_1)
    \sum_{n=1}^n
    e\left(\left(h\beta-k_1\right)\alpha_1\left\lfloor\alpha_2 p(an+b)
    \right\rfloor\right)\\
    &\quad=\sum_{\abs{k_1}\leq K_1}
    \widehat{G_{r_1}}(k_1,-h\beta,\delta_1)
    \sum_{n=1}^n
    e\left(\left(h\beta-k_1\right)\alpha_1\left\lfloor\alpha_2 p(an+b)
    \right\rfloor\right)
    +\mathcal{O}\left(N(\delta_1K_1)^{-r_1}\right),
  \end{align*}
  where we applied Lemma \ref{lem:large_Fourier_coefficients} in the last step.

  Since we got rid of only one floor function we need to iterate the whole
  procedure to eliminate the second one. By a similar argument we get
  \begin{multline*}
    \sum_{n=1}^n
    e\left(\left(h\beta-k_1\right)\alpha_1\left\lfloor\alpha_2 p(an+b)
    \right\rfloor\right)\\
    =
    \sum_{n=1}^n
    e\left(\left(h\beta-k_1\right)\alpha_1\left\lfloor\alpha_2 p(an+b)
    \right\rfloor\right)G_{r_2}\left(\alpha_2p(an+b),-(h\beta-k_1)\alpha_1,\delta_2\right)
    +\mathcal{O}\left(R_2\right),
  \end{multline*}
  where
  \begin{gather}\label{eq:R_2}
    R_2=\sum_{n=1}^N\abs{
      F\left(\alpha_2p(an+b),-(h\beta-k_1)\alpha_1\right)
      - G_{r_2}\left(\alpha_2p(an+b),-(h\beta-k_1)\alpha_1,\delta_2\right)
    }.    
  \end{gather}

  Furthermore using the Fourier series of $G_{r_2}$ and a truncation yields
  \begin{align*}
    &\sum_{n=1}^n
    e\left(\left(h\beta-k_1\right)\alpha_1\left\lfloor\alpha_2 p(an+b)
    \right\rfloor\right)G_{r_2}\left(\alpha_2p(an+b),-(h\beta-k_1)\alpha_1,\delta_2\right)\\
    &\quad=\sum_{\abs{k_2}\leq K_2}
    \widehat{G_{r_2}}\left(k_2,-(h\beta-k_1)\alpha_1,\delta_2\right)
    \sum_{n=1}^n
    e\left(\left(h\alpha_1\alpha_2\beta-k_1\alpha_1\alpha_2-k_2\alpha_2\right)p(an+b)
    \right)\\
    &\quad\quad+\mathcal{O}\left(N\left(\delta_2K_2\right)^{-r_2}\right).
  \end{align*}

  Let's pause for a moment and summarise what we have so far. Plugging
  everything into the original exponential sum in \eqref{eq:10} we have
  \begin{align}\label{eq:12}
    \abs{\frac1N\sum_{n=1}^n
      e\left(h\beta\left\lfloor\alpha_1\left\lfloor\alpha_2 p(an+b)
      \right\rfloor\right\rfloor\right)}
    \ll S_1
      +S_2
      +S_3
      +S_4
      +S_5,
  \end{align}
  where
  \begin{align*}
    S_1&=\sum_{\abs{k_1}\leq K_1}
    \widehat{G_{r_1}}(k_1,-h\beta,\delta_1)
    \sum_{\abs{k_2}\leq K_2}
    \widehat{G_{r_2}}\left(k_2,-(h\beta-k_1)\alpha_1,\delta_2\right)\\
    &\quad\times
    \abs{\frac1N\sum_{n=1}^n
    e\left(\left(h\alpha_1\alpha_2\beta-k_1\alpha_1\alpha_2-k_2\alpha_2\right)p(an+b)
    \right)}\\
    S_2&=\sum_{\abs{k_1}\leq K_1}
    \widehat{G_{r_1}}(k_1,-h\beta,\delta_1)\left(\delta_2K_2\right)^{-r_2}\\
    S_3&=\sum_{\abs{k_1}\leq K_1}
    \widehat{G_{r_1}}(k_1,-h\beta,\delta_1)\frac{R_2}{N}\\
    S_4&=\left(\delta_1K_1\right)^{-r_1} & &\text{and}\\
    S_5&=\frac{R_1}{N}.
  \end{align*}

  We start with $S_1$ since it is the only part involving sums over $k_1$ and $k_2$. Applying Hölder's inequality we get
  \begin{align*}
    \abs{S_1}
    &\ll
    \left(\sum_{\abs{k_1}\leq K_1}
    \sum_{\abs{k_2}\leq K_2}
    \left|\widehat{G_{r_1}}(k_1,-h\beta,\delta_1)
    \widehat{G_{r_2}}\left(k_2,-(h\beta-k_1)\alpha_1,\delta_2\right)\right|^{\frac{2^{d-1}}{2^{d-1}-1}}\right)^{\frac{2^{d-1}-1}{2^{d-1}}}\\
    &\quad\times
      \left(\sum_{\abs{k_1}\leq K_1}
      \sum_{\abs{k_2}\leq K_2}\left|
      \frac1N\sum_{n=1}^n
      e\left(\left(h\alpha_1\alpha_2\beta-k_1\alpha_1\alpha_2-k_2\alpha_2\right)p(an+b)
      \right)\right|^{2^{d-1}}\right)^{\frac{1}{2^{d-1}}}.
  \end{align*}
  
  Since $\frac{2^{d-1}}{2^{d-1}-1}>1$ a double application of Lemma
  \ref{lem:p-norm_Fourier_coefficients} yields
  \begin{align*}
    \abs{S_1}
    &\ll
      \left(\sum_{\abs{k_1}\leq K_1}
      \sum_{\abs{k_2}\leq K_2}\left|
      \frac1N\sum_{n=1}^n
      e\left(\left(h\alpha_1\alpha_2\beta-k_1\alpha_1\alpha_2-k_2\alpha_2\right)p(an+b)
      \right)\right|^{2^{d-1}}\right)^{\frac{1}{2^{d-1}}}.
  \end{align*}

  By assumption $(\alpha_1\alpha_2\beta, \alpha_1\alpha_2, \alpha_2)$ is of
  type $t$ and we may apply Lemma \ref{lem:type_t_exponential_sum} to get
  \begin{align*}
    \abs{S_1}
    &\ll
      \left(\sum_{\abs{k_1}\leq K_1}
      \sum_{\abs{k_2}\leq K_2}
      a^{\frac{3dt}{3t+1}+\varepsilon}
      \abs{hk_1k_2}^{\frac{t}{3t+1}+\varepsilon}
      N^{-\frac{2-2^{2-d}}{3t+1}+\varepsilon}
      \right)^{\frac{1}{2^{d-1}}}. 
  \end{align*}
  Summing over $k_2$ yields
  \begin{gather}\label{eq:11}
    \abs{S_1}
    \ll
      \left(\sum_{\abs{k_1}\leq K_1}
      a^{\frac{3dt}{3t+1}+\varepsilon}
      \abs{hk_1}^{\frac{t}{3t+1}+\varepsilon}
      N^{-\frac{2-2^{2-d}}{3t+1}+\varepsilon}
      K_2^{\frac{4t+1}{3t+1}+\varepsilon}
      \right)^{\frac{1}{2^{d-1}}}. 
  \end{gather}

  We continue with part $S_3$ and again use Hölder's inequality to get rid of
  the Fourier coefficients. Following the same arguments and applying Lemma
  \ref{lem:p-norm_Fourier_coefficients} and Lemma \ref{lem:approximation_by_G}
  yields
  \begin{equation}\label{eq:S_3}
    \begin{split}
      \abs{S_3}
      &\ll
      \left(\sum_{\abs{k_1}\leq K_1}\left|
      \widehat{G_{r_1}}(k_1,-h\beta,\delta_1)\right|^{\frac{2^{d-1}}{2^{d-1}-1}}\right)^{\frac{2^{d-1}-1}{2^{d-1}}}
      \left(\sum_{\abs{k_1}\leq K_1}
      \left|\frac{R_2}{N}\right|^{2^{d-1}}\right)^{\frac{1}{2^{d-1}}}\\
      &\ll
      \left(\sum_{\abs{k_1}\leq K_1}\left(r_2\delta_2+r_2^2\delta_2\abs{h\beta\alpha_1-k_1\alpha_1}
      +D_N\left(\left(\alpha_2p(an+b)\right)_{n=1}^N\right)
      \right)^{2^{d-1}}
      \right)^{\frac{1}{2^{d-1}}}.
    \end{split}
  \end{equation}

  Now we want to choose $r_2$, $\delta_2$ and $K_2$ such that $S_1$ dominates $S_3$. Therefore we first set $\rho_2=1+\varepsilon_2$ with $\varepsilon_2=\varepsilon_2(d,t)>0$ sufficiently small and  $r_2>\frac{1}{\varepsilon_2}$. Furthermore we set
  \[
    \delta_2^{-1}=\abs{hk_1}N^{\theta_2}
    \quad\text{and}\quad
    K_2=\abs{hk_1}^{\rho_2}N^{\theta_2}
    \quad\text{with}\quad
    \theta_2=\frac{2-2^{2-d}}{2^{d-1}(3t+1)+4t+1}.
  \]
  Then using Proposition \ref{prop:discrepancy_basis_case} for the discrepancy we get that
  \begin{align*}
    S_3\ll S_1
    \ll
    \left(\sum_{\abs{k_1}\leq K_1}
      a^{\frac{3dt}{3t+1}+\varepsilon}
      \abs{hk_1}^{\frac{5t+1}{3t+1}+\varepsilon}
      N^{-\frac{2^{d-1}(2-2^{2-d})}{2^{d-1}(3t+1)+4t+1  }+\varepsilon}
      \right)^{\frac{1}{2^{d-1}}}.
  \end{align*}

  Summing over $k_1$ yields
  \begin{align*}
    S_1
    \ll
    a^{\frac{3dt}{2^{d-1}(3t+1)}+\varepsilon}
      h^{\frac{5t+1}{2^{d-1}(3t+1)}+\varepsilon}
      N^{-\frac{2-2^{2-d}}{2^{d-1}(3t+1)+4t+1}+\varepsilon}
      K_1^{\frac{8t+2}{2^{d-1}(3t+1)}+\varepsilon}.
  \end{align*}
  Again we set $\rho_1=1+\varepsilon_1$ with $\varepsilon_1=\varepsilon_1(d,t)>0$ sufficiently small and $r_1>\frac{1}{\varepsilon_1}$. Let $\theta_1>0$ be a parameter we choose in an instant. Setting
  \[
    \delta_1^{-1}=hN^{\theta_1}
    \quad\text{and}\quad
    K_1=h^{\rho_1}N^{\theta_1}
  \]
  we get
  \begin{align*}
    S_1
    \ll
    a^{\frac{3dt}{2^{d-1}(3t+1)}+\varepsilon}
      h^{\frac{13t+3}{2^{d-1}(3t+1)}+\varepsilon}
      N^{-\frac{2-2^{2-d}}{2^{d-1}(3t+1)+4t+1}+
      \theta_1\frac{8t+2}{2^{d-1}(3t+1)}+\varepsilon}.
  \end{align*}
  Finally we recall the outer sum over $h$ in \eqref{eq:13} to get
  \begin{gather}\label{eq:h_sum_S_1}
    \sum_{h=1}^H \frac1h S_1
    \ll
    a^{\frac{3dt}{2^{d-1}(3t+1)}+\varepsilon}
      H^{\frac{13t+3}{2^{d-1}(3t+1)}+\varepsilon}
      N^{-\frac{2-2^{2-d}}{2^{d-1}(3t+1)+4t+1}+
      \theta_1\frac{8t+2}{2^{d-1}(3t+1)}+\varepsilon}.
  \end{gather}

  Now we turn our attention to the sum $S_2$. Again by Hölder's inequality we get
  \begin{align*}
    \abs{S_2}
    &\ll
    \left(\sum_{\abs{k_1}\leq K_1}
    \left|\widehat{G_{r_1}}(k_1,-h\beta,\delta_1)\right|^{\frac{2^{d-1}}{2^{d-1}-1}}\right)^{\frac{2^{d-1}-1}{2^{d-1}}}
    \left(\sum_{\abs{k_1}\leq K_1}
    \left|
    \left(\delta_2K_2\right)^{-r_2}\right|^{2^{d-1}}\right)^{\frac{1}{2^{d-1}}}\\
    &\ll
    \left(\sum_{\abs{k_1}\leq K_1}
    \abs{hk_1}^{-2^{d-1}+\varepsilon}\right)^{\frac{1}{2^{d-1}}},
  \end{align*}
  where we have used that
  \[
    (\delta_2K_2)^{-r_2}=\abs{hk_1}^{-(\rho_2-1)r_2}\leq \abs{hk_1}^{-1+\varepsilon}.
  \]
  Summing over $k_1$ yields
  \begin{align*}
    \abs{S_2}
    &\ll
    \left(\abs{h}^{-2^{d}+1+\varepsilon}N^{2^{d-1}-\theta_1(2^{d-1}-1)+\varepsilon}\right)^{\frac{1}{2^{d-1}}}\\
    &\ll
    h^{-\frac{2^d-1}{2^{d-1}}+\varepsilon}N^{-\theta_1\frac{2^{d-1}-1}{2^{d-1}}+\varepsilon}
  \end{align*}
  and finally summing over $h$ in \eqref{eq:13} we get
  \begin{gather}\label{eq:h_sum_S_2}
    \sum_{h=1}^H \frac1h S_2
    \ll
    H^{-\frac{2^d-1}{2^{d-1}}+\varepsilon}N^{-\theta_1\frac{2^{d-1}-1}{2^{d-1}}+\varepsilon}
  \end{gather}
  
  By similar calculations we obtain
  \begin{gather}\label{eq:h_sum_S_4_and_S_5}
    \sum_{h=1}^H \frac1h S_4 \ll H^{-1}
    \quad\text{and}\quad
    \sum_{h=1}^H \frac1h S_5 \ll N^{-\theta_1}.
  \end{gather}

  Now we set $H=a^{-\sigma} N^{\theta_1}$ and choose
  \[
    \sigma=\frac{3dt}{2^{d}(3t+1)+5t+1}
    \quad\text{and}\quad
    \theta_1=\frac{(2-2^{2-d})2^{d-1}(3t+1)}{(2^{d-1}(3t+1)+21t+5)(2^{d}(3t+1)+4t+1)}.
  \]
  Replacing the four parts \eqref{eq:h_sum_S_1}, \eqref{eq:h_sum_S_2} and \eqref{eq:h_sum_S_4_and_S_5} in the Erd\H{o}s-Turán inequality \eqref{eq:13} proves the proposition.
\end{proof}

\section{Proof of Theorem \ref{thm:main}}

As indicated in the introduction we link the estimate of along the arithmetic
progressions with the discrepancy estimate we obtained in the section above. To
this end we fix an arbitrary arithmetic progression, \textit{i.e.} let
$a\in\NN^*$, $b\in\ZZ$ and $1\leq x\leq N$. Then
\begin{align*}
  \abs{\sum_{am+b\leq x}e_{am+b}}
  &\leq\abs{\sum_{am+b\leq x} \left(1\!\!1_{\{f(am+b)\}<\tfrac12}-\frac12\right)}
    +\abs{\sum_{am+b\leq x} \left(1\!\!1_{\{f(am+b)\}\geq\tfrac12}-\frac12\right)}\\
  &\leq 2MD_M\left(\left(\beta\left\lfloor\alpha_1\left\lfloor\alpha_2 p(am+b)
  \right\rfloor\right\rfloor\right)_{m=1}^M\right),
\end{align*}
where $M=\left\lfloor\frac{x-b}{a}\right\rfloor$.

Now we distinguish two cases according to the size of $a$, namely if
\[
  a\leq x^{\frac{1}{(2^{d}(3t+1)+21t+5)^2}}
\]
or not. In the first case we may apply Proposition \ref{prop:discrepancy_second_iteration} and get that there exists $\eta_1=\eta_1(d,t)$ such that
\[
  \abs{\sum_{am+b\leq x}e_{am+b}}
  \ll x^{1-\eta_1}.
\]

In the second case we have
\[
  a>x^{\frac{1}{(2^{d}(3t+1)+21t+5)^2}}
\]
and a trivially estimate yields
\[
  \abs{\sum_{am+b\leq x}e_{am+b}}
  \ll x^{1-\frac{1}{(2^{d}(3t+1)+21t+5)^2}}.
\]

Thus the theorem is proven by these two cases.

\section*{Acknowledgment}
The first author is supported by project ANR-18-CE40-0018 funded by
the French National Research Agency.


\begin{thebibliography}{10}

  \bibitem{aistleitner2013:limit_distribution_well}
  C.~Aistleitner.
  \newblock On the limit distribution of the well-distribution measure of random binary sequences.
  \newblock {\em J. Th\'eor. Nombres Bordeaux}, 25(2):245--259, 2013.
  
  \bibitem{bergelson_ha_son2021:extension_weyls_equidistribution}
  V.~Bergelson, I.~J. H\aa~land Knutson, and Y.~Son.
  \newblock An extension of {W}eyl's equidistribution theorem to generalized polynomials and applications.
  \newblock {\em Int. Math. Res. Not. IMRN}, (19):14965--15018, 2021.
  
  \bibitem{bergelson_kolesnik_madritsch+2014:uniform_distribution_prime}
  V.~Bergelson, G.~Kolesnik, M.~Madritsch, Y.~Son, and R.~Tichy.
  \newblock Uniform distribution of prime powers and sets of recurrence and van der {C}orput sets in {$\Bbb{Z}^k$}.
  \newblock {\em Israel J. Math.}, 201(2):729--760, 2014.
  
  \bibitem{bergelson_kolesnik_son2019:uniform_distribution_subpolynomial}
  V.~Bergelson, G.~Kolesnik, and Y.~Son.
  \newblock Uniform distribution of subpolynomial functions along primes and applications.
  \newblock {\em J. Anal. Math.}, 137(1):135--187, 2019.
  
  \bibitem{drmota_tichy1997:sequences_discrepancies_and}
  M.~Drmota and R.~F. Tichy.
  \newblock {\em Sequences, discrepancies and applications}, volume 1651 of {\em Lecture Notes in Mathematics}.
  \newblock Springer-Verlag, Berlin, 1997.
  
  \bibitem{graham_kolesnik1991:van_der_corputs}
  S.~W. Graham and G.~Kolesnik.
  \newblock {\em van der {C}orput's method of exponential sums}, volume 126 of {\em London Mathematical Society Lecture Note Series}.
  \newblock Cambridge University Press, Cambridge, 1991.
  
  \bibitem{granville_ramare1996:explicit_bounds_on}
  A.~Granville and O.~Ramar\'{e}.
  \newblock Explicit bounds on exponential sums and the scarcity of squarefree binomial coefficients.
  \newblock {\em Mathematika}, 43(1):73--107, 1996.
  
  \bibitem{hofer_ramare2016:discrepancy_estimates_some}
  R.~Hofer and O.~Ramar\'{e}.
  \newblock Discrepancy estimates for some linear generalized monomials.
  \newblock {\em Acta Arith.}, 173(2):183--196, 2016.
  
  \bibitem{kuipers_niederreiter1974:uniform_distribution_sequences}
  L.~Kuipers and H.~Niederreiter.
  \newblock {\em Uniform distribution of sequences}.
  \newblock Wiley-Interscience [John Wiley \& Sons], New York, 1974.
  \newblock Pure and Applied Mathematics.
  
  \bibitem{madritsch_rivat_tichy2024:finite_pseudorandom_binary}
  M.~G. Madritsch, J.~Rivat, and R.~F. Tichy.
  \newblock On finite pseudorandom binary sequences: functions from a {H}ardy field.
  \newblock {\em Acta Math. Hungar.}, 174(1):121--137, 2024.
  
  \bibitem{madritsch_tichy2016:dynamical_systems_and}
  M.~G. Madritsch and R.~F. Tichy.
  \newblock Dynamical systems and uniform distribution of sequences.
  \newblock In {\em From arithmetic to zeta-functions}, pages 263--276. Springer, [Cham], 2016.
  
  \bibitem{madritsch_tichy2019:multidimensional_van_der}
  M.~G. Madritsch and R.~F. Tichy.
  \newblock Multidimensional van der {C}orput sets and small fractional parts of polynomials.
  \newblock {\em Mathematika}, 65(2):400--435, 2019.
  
  \bibitem{mauduit_rivat_sarkoezy2002:pseudo_random_properties}
  C.~Mauduit, J.~Rivat, and A.~S{\'a}rk{\"o}zy.
  \newblock On the pseudo-random properties of {$n^c$}.
  \newblock {\em Illinois J. Math.}, 46(1):185--197, 2002.
  
  \bibitem{mauduit_sarkoezy1997:finite_pseudorandom_binary}
  C.~Mauduit and A.~S{\'a}rk{\"o}zy.
  \newblock On finite pseudorandom binary sequences. {I}. {M}easure of pseudorandomness, the {L}egendre symbol.
  \newblock {\em Acta Arith.}, 82(4):365--377, 1997.
  
  \bibitem{mauduit_sarkoezy2000:finite_pseudorandom_binary}
  C.~Mauduit and A.~S{\'a}rk{\"o}zy.
  \newblock On finite pseudorandom binary sequences. {V}. {O}n {$(n\alpha)$} and {$(n^2\alpha)$} sequences.
  \newblock {\em Monatsh. Math.}, 129(3):197--216, 2000.
  
  \bibitem{mauduit_sarkoezy2000:finite_pseudorandom_binary2}
  C.~Mauduit and A.~S{\'a}rk{\"o}zy.
  \newblock On finite pseudorandom binary sequences. {VI}. {O}n {$(n^k\alpha)$} sequences.
  \newblock {\em Monatsh. Math.}, 130(4):281--298, 2000.
  
  \bibitem{montgomery1994:ten_lectures_on}
  H.~L. Montgomery.
  \newblock {\em Ten lectures on the interface between analytic number theory and harmonic analysis}, volume~84 of {\em CBMS Regional Conference Series in Mathematics}.
  \newblock Published for the Conference Board of the Mathematical Sciences, Washington, DC, 1994.
  
  \bibitem{mukhopadhyay_ramare_viswanadham2018:discrepancy_estimates_generalized}
  A.~Mukhopadhyay, O.~Ramar\'{e}, and G.~K. Viswanadham.
  \newblock Discrepancy estimates for generalized polynomials.
  \newblock {\em Monatsh. Math.}, 187(2):343--356, 2018.
  
  \bibitem{nathanson1996:additive_number_theory}
  M.~B. Nathanson.
  \newblock {\em Additive number theory}, volume 164 of {\em Graduate Texts in Mathematics}.
  \newblock Springer-Verlag, New York, 1996.
  \newblock The classical bases.
  
  \bibitem{rivat_tenenbaum2005:constantes_derd_os}
  J.~Rivat and G.~Tenenbaum.
  \newblock Constantes d'{E}rd{\H o}s-{T}ur\'an.
  \newblock {\em Ramanujan J.}, 9(1-2):111--121, 2005.
  
  \end{thebibliography}

\end{document}